\theoremstyle{Lehn-up}
\newcommand{\Diff}{\mathrm{Diff}}
\newcommand{\infect}{\!\succ\!}
\title[Virus infections and Corona surfaces]{Virus infections, Corona surfaces, and extra  components in the moduli space of stable surfaces}
\author{S\"onke Rollenske}
\address{S\"onke Rollenske\\FB 12/Mathematik und Informatik\\
Philipps-Universit\"at Marburg\\
Hans-Meerwein-Str. 6\\
35032 Marburg\\
Germany}
\email{rollenske@mathematik.uni-marburg.de}
\begin{document}
\begin{abstract}
We construct examples of non-smoothable stable surfaces, that we call Corona surfaces, with invariants in a wide range comprising all possible invariants of smooth minimal surfaces of general type but non-standard second plurigenus. 

We deduce that the moduli space of stable surfaces $\overline \gothM_{k,l}$   has  least $k-l+2$ connected components distinguished by the second plurigenus and, in particular, always has more irreducible components than the Gieseker moduli space with the same invariants.

Disclaimer: While the language and the pictures in this note are inspired by current events, the actual content is a result of pure mathematics not related to medical research.
\end{abstract}
\subjclass[2010]{14J10, 14J29}
\keywords{stable surface, moduli spaces, geography of surfaces}

\maketitle
\setcounter{tocdepth}{1}

\begin{center}
 \begin{tikzpicture}[scale = .25,
exceptional/.style = {red, thick}, 
 nonnormal/.style ={very thick, blue}]
\def\order{12}
 \foreach \n in {1,...,\order}
 {
\draw[thick, rotate = \n*360/\order, black, fill = black!20!white] 
(0:1cm) 
to (0:6cm) 
  to[out = 90, in =180/\order+180] 
(180/\order:10cm +12/\order cm)
 to[out = 180/\order+180, in = 360/\order-90]
 (360/\order:6cm)
 to (360/\order:1cm)
;
}
\draw[fill = white]  (0,0) circle[radius = 1] ;
\draw[exceptional]  (0,0) circle[radius = 2] ;
\draw[exceptional]  (0,0) circle[radius = 3.5] ;
\draw[exceptional]  (0,0) circle[radius = 5] ;
\foreach \n in {1,...,\order}
 {
 \draw[rotate = \n*360/\order, nonnormal] (0:1cm) to (0:6cm);
 }
\end{tikzpicture}
\footnote{Corona surface $X$ with 12 irreducible virus components ($K_X^2 = 4$ and $\chi(X) = 3$).}

\end{center}

\tableofcontents

\section{Introduction}
The moduli space of stable surfaces $\overline{\gothM}_{k,l}$ is a natural compactification  of Gieseker's moduli space of canonical models of surfaces of general type with $k=K_X^2$ and $l= \chi(\ko_X)$, see \cite{Kollar13, KollarModuli} and references therein.

While it is still true that the Gieseker moduli space ${\gothM}_{k,l}$ is an open subset of $\overline{\gothM}_{k,l}$, the complement is no longer a divisor as in the moduli space of stable curves, and in many cases extra irreducible or even connected components have been found,e.g., \cite{fprr20} or \cite[5.3.3]{LR16}.

The purpose of this note is to show that this is not an accident. Recall that the invariants $K^2$ and $\chi$ of canonical models of surfaces of general type are restricted as depicted in Figure \ref{fig: geography} (see \cite[Ch.~VII]{BHPV}).

 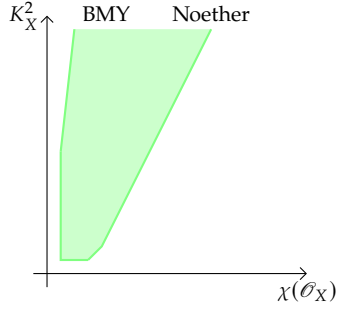
\begin{figure}[ht]
\scriptsize
\begin{tikzpicture}
[scale=.18, 
axes/.style={},
classical/.style={thick, green!50!white},
stable/.style={thick},
pt/.style={circle,draw, fill=black, size=1mm}
]

\begin{scope}[stable]
\begin{scope}[classical]
\fill[green!20!white] (12, 18) -- (4, 2) -- (3,1) -- (1,1) -- (1,9) -- (2, 18) -- cycle;
\draw (3, 1) -- (4,2) -- (12, 18) ;
\draw (1, 9) -- (2,18);

 \draw (3, 1)--(1,1) -- (1,9); 
\end{scope}
 \draw (12, 18)  node[above] {Noether};
\draw (2,18) node[above right] {BMY};
\end{scope}
\begin{scope}[axes]
\draw[->]  (-1,0)--(19,0) node[below] {$\chi(\ko_X)$};
\draw[->]  (0,-1)--(0,19) node[left]  {$K_X^2$};
\end{scope}
\end{tikzpicture}
 \caption{The geography of minimal surfaces of general type}\label{fig: geography}
\end{figure}
We show
\begin{custom}[Theorem A]
Let $(k,l)\in \IN\times \IN$ be a pair of positive integers allowed by the Noether-inequality $k\geq 2l-6$. Then there exists a stable surface $Z_{k,l}$ with invariants $K_{Z_{k,l}}^2 = k$ and $\chi(Z_{k,l}) = l$ which does not admit a $\IQ$-Gorenstein smoothing in the large. 
In particular, the moduli space of stable surfaces $\overline{\gothM}_{k,l}$ has more irreducible components than the Gieseker moduli space ${\gothM}_{k,l}$ and extra connected components.
\end{custom}

More precisely, our examples show, that for $l\geq 0$ the moduli space of stable surfaces  $\overline{\gothM}_{k,l}$ has at least $k-l+2$ connected components distinguished by the second plurigenus. 

We phrase our construction in terms of a spiky virus surface infecting honest, locally smoothable Gorenstein surfaces constructed in \cite{LR16}. For certain configurations this gives suggestive Figures as above, which is why we call these surfaces Corona surfaces.

\subsection*{Acknowledgements}
I would like to thank Diana Torres for stimulating discussions about quotient singularities and volumes and my collaborators Wenfei Liu, Marco Franciosi, Rita Pardini, and Julie Rana for many past, present and hopefully future discussions on stable surfaces.

I am grateful to Stephen Coughlan for a discussion leading to Remark \ref{rem: coughlan} and from there to a strengtheing of the main result.

\section{Set-up and Koll\'ar's glueing}
We work with algebraic varieties over the complex numbers and write  $\chi(X)$ for $\chi(\ko_X)$. A canonical divisor will be denoted by $K_X$, the reflexive powers of the canonical sheaf by $\omega_X^{[m]}$.

We recall some facts about stable surfaces, the authorative reference is  \cite[Sect.~5.1--5.3]{KollarSMMP}.
 
Let $X$ be a non-normal stable surface and $\pi\colon \bar X\to X$ its normalisation. Recall that the non-normal locus $D\subset X$ and its preimage $\bar D\subset \bar X$ are pure of codimension $1$, i.\,e., curves. Since $X$ has ordinary double points at the generic points of $D$ the map on normalisations $\bar D^\nu\to D^\nu$ is the quotient by an involution $\tau$. 
Koll\'ar's glueing principle says that $X$ can be uniquely reconstructed from $(\bar X, \bar D, \tau\colon \bar D^\nu\to \bar D^\nu)$ via the following two push-out squares:
\begin{equation}\label{diagr: pushout}
\begin{tikzcd}
    \bar X \dar{\pi}\rar[hookleftarrow]{\bar\iota} & \bar D\dar{\pi} & \bar D^\nu \lar[swap]{\bar\nu}\dar{/\tau}
    \\
X\rar[hookleftarrow]{\iota} &D &D^\nu\lar[swap]{\nu}
    \end{tikzcd}
\end{equation}

We will not use the full power of the gluing result, so let us set up a special case: 
 Let $(\bar X, \bar D)$ be a log-canonical pair of dimension $2$, possibly with many components, hence $\bar D$ is a nodal curve.  Let $\bar\nu \colon \bar D^\nu \to \bar D$ be the normalisation and denote by $\bar S\subset \bar D^\nu$ the set of preimages of nodes of $D$; in other words, $\bar S$ is the conductor of $\bar \nu$ respectively the support of the different $\Diff_{\bar D^\nu}(0)$. 
 \begin{prop}\label{prop: easy glueing}
 Assume that $\bar X$ is smooth near $\bar D$ and let $\tau$ be an involution on the normalisation $\bar D^\nu$ which preserves $\bar S$.  Denote by $\rho$ the number of fixed points of $\tau$.  
 Let  $\bar \mu = \frac 12 |S|$ be the number of nodes of $\bar D$ and let   $\delta$ 
  be the number of equivalence classes of the equivalence relation  on $\bar S$ generated by $x\sim y$ if $y = \tau x$ or $\bar\nu(x) = \bar \nu(y)$, which do not contain a fixed point of $\tau$. 
  
  Then there exists a stable surface $X$ with normalisation $\bar X$ as in diagram \eqref{diagr: pushout} with invariants $K_X^2 = (K_{\bar X}+\bar D)^2$ and
  \[ \chi(X) = \chi(\bar X) -\frac 12 \chi(\bar D) -\frac12 \bar \mu + \delta+\frac\rho4.\]
  Near the conductor $D$, the surface $X$ is $2$-Gorenstein  and Gorenstein if  $\tau|_S$ is fixed point-free. It has has $\delta$ degenerate cusps. 
 \end{prop}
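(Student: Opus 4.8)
\emph{Overview and Step 1 (existence of $X$, and $K_X^2$).} The plan is to feed the triple $(\bar X,\bar D,\tau)$ into Koll\'ar's glueing theorem to obtain $X$, read off $K_X^2$ from the normalisation map, and compute $\chi(X)$ from the conductor (Mayer--Vietoris) sequence together with a little combinatorics on $\bar D^\nu$. As recalled above, $\bar S$ is the support of $\Diff_{\bar D^\nu}(0)$, and since $\bar X$ is smooth near the nodal curve $\bar D$ this different is reduced; as $\tau$ preserves $\bar S$ it therefore preserves the different, so Koll\'ar's glueing \cite[Sect.~5.3]{KollarSMMP} produces a demi-normal surface $X$ realising diagram \eqref{diagr: pushout} with normalisation $\bar X$, conductor $D$ and glueing involution $\tau$; it is semi-log-canonical because $(\bar X,\bar D)$ is log canonical, and $K_X$ is $\IQ$-Cartier. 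Reflexive pullback along the finite birational morphism $\pi$ gives $\pi^{*}\omega_X^{[m]}\cong\omega_{\bar X}^{[m]}(m\bar D)$, so $K_X$ is ample (here one uses that $K_{\bar X}+\bar D$ is ample, which is what makes $X$ stable) and, $\pi$ having degree one, $K_X^2=(K_{\bar X}+\bar D)^2$.

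\emph{Step 2 (local structure near $D$).} Over the generic point of a component of $D$ the surface $X$ has a node, and over a fixed point of $\tau$ not on $\bar S$ it has a pinch point; both are Gorenstein. For the points lying under $\bar S$, introduce the graph $G$ with vertex set $\bar S$ and edges the pairs $\{x,\tau x\}$ with $\tau x\neq x$ together with the fibres of $\bar\nu$ over the nodes of $\bar D$. The $\bar\nu$-edges form a perfect matching on $\bar S$ and each non-fixed vertex lies on exactly one $\tau$-edge, so $G$ is a disjoint union of cycles and of paths, the endpoints of each path being exactly the $\tau$-fixed points on $\bar S$; its connected components are precisely the equivalence classes of the statement. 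A cyclic component gives a degenerate cusp of $X$ (a cone over the associated cycle of branches) and a path component gives a point where $X$ is $2$-Gorenstein but not Gorenstein (a quotient by an involution of such a configuration). Hence $X$ is $2$-Gorenstein near $D$; it is Gorenstein near $D$ exactly when $G$ has no path component, i.e.\ when $\tau|_{\bar S}$ is fixed point free; and it has $\delta$ degenerate cusps, $\delta$ being the number of cyclic components.

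\emph{Step 3 ($\chi(X)$).} The conductor square yields the Mayer--Vietoris sequence
\[0\to\ko_X\to\pi_*\ko_{\bar X}\oplus\ko_D\to\ko_{\bar D}\to 0,\]
so $\chi(X)=\chi(\bar X)+\chi(\ko_D)-\chi(\bar D)$. Since $\bar D$ is nodal with $\bar\mu$ nodes, $\chi(\ko_{\bar D^\nu})=\chi(\bar D)+\bar\mu$. The quotient $\bar D^\nu\to D^\nu=\bar D^\nu/\tau$ is a double cover branched at the $\rho$ fixed points of $\tau$, whence, comparing topological Euler numbers, $\chi(\ko_{D^\nu})=\tfrac12\chi(\ko_{\bar D^\nu})+\tfrac{\rho}{4}$. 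Finally $D$ is obtained from $D^\nu$ by glueing the $\bar\mu+\tfrac12\rho'$ points in the image of $\bar S$ down to the $\delta+\tfrac12\rho'$ components of $G$, where $\rho'$ is the number of $\tau$-fixed points on $\bar S$ (each path component carrying exactly two of them, by Step 2), so $\chi(\ko_D)=\chi(\ko_{D^\nu})-\bar\mu+\delta$. Substituting and simplifying, with the $\rho'$ cancelling, gives $\chi(X)=\chi(\bar X)-\tfrac12\chi(\bar D)-\tfrac12\bar\mu+\delta+\tfrac{\rho}{4}$.

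\emph{Main obstacle.} The delicate point is Step 2: one must correctly identify the analytic type of $X$ at each point over $\bar S$ from the combinatorics of $G$ (cycle versus path), since this single input simultaneously justifies the count of degenerate cusps, the $2$-Gorenstein versus Gorenstein dichotomy, and the glueing correction $-\bar\mu+\delta$ feeding into Step 3. One also has to keep careful track of the difference between all fixed points of $\tau$ (which enter through the branched cover and produce the $\tfrac\rho4$) and those on $\bar S$ (which enter, and then cancel, in the glueing correction); the remaining Euler-characteristic manipulations are routine.
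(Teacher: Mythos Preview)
Your argument is correct and follows the same underlying strategy as the paper: apply Koll\'ar's glueing to produce $X$, read off the local singularity types along $D$ from the combinatorics of how $\tau$ and $\bar\nu$ interact on $\bar S$, and then compute $\chi(X)$ from the conductor sequence. The difference is one of presentation rather than substance: the paper disposes of Steps~2 and~3 entirely by citation (the classification of slc surface singularities via semi-resolutions in \cite[Addendum]{FPR15a} for the local structure and Gorenstein index, and \cite[Prop.~3.3, Cor.~3.5]{FPR15a} for the invariant formulas), whereas you unpack these references by introducing the auxiliary graph $G$ and carrying out the Mayer--Vietoris and Riemann--Hurwitz bookkeeping by hand. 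Your version is thus more self-contained, while the paper's is shorter; mathematically they coincide.

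One small point worth tightening: in Step~3 you compute $\chi(\ko_D)$ by ``glueing points to components'', which implicitly uses that $D$ is seminormal at each point under $\bar S$. This is true, but it is a consequence of the explicit local models you identified in Step~2 (at a degenerate cusp $D$ is locally a union of coordinate lines, and at the $\IZ/2$-quotient points it is the quotient of such a configuration), so a one-line pointer back to Step~2 would close the loop. Alternatively, one can bypass seminormality altogether by applying the pushout sequence for the square $\bar D\leftarrow\bar D^\nu\to D^\nu$ directly, which yields the same correction $-\bar\mu+\delta$.
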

 \begin{proof}
  Locally near $\bar D$ we have a nodal curve on a smooth surface. Thus the condition on the involution of preserving the preimages of nodes is exactly the 
  condition of preserving the different and Koll\'ar's glueing result \cite[Thm.~5.13]{KollarSMMP} applies. Counting the number of degenerate cusps and determining the Cartier-index near $D$ follows from the classification of slc singularities via their semi-resolutions as in \cite[Addendum, p.1535]{FPR15a}.
  
  The formulas for the invariants are contained in \cite[Prop.~3.3, Cor.~3.5] {FPR15a}.
 \end{proof}

\begin{rem}
 By our assumptions, each of the degnerate cusps is, locally analytically, of a particularly easy form, depending only on the number of nodes mapping to it: it is either the cone over a plane nodal cubic, a hypersurface with equation $z^2-x^2y^2=0$ or a cone over a cycle $n$ independent lines in $\IP^{n-1}$ (see
 \cite{LR16}).
\end{rem}

 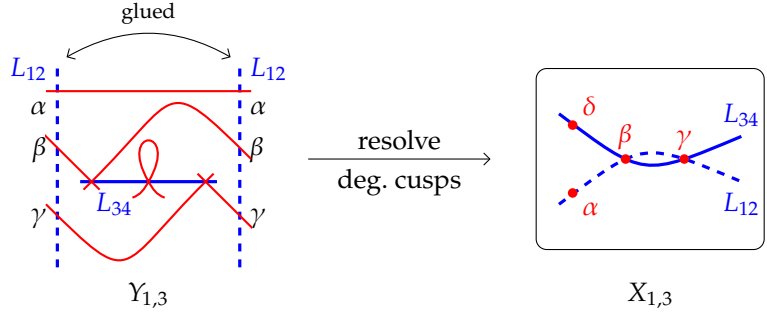
\begin{figure}[h!]\caption{The surfaces $Y_{1,3}$ and $X_{1,3}$, which has $4$ degenerate cusps.}\label{fig: X_13}
\small
 \begin{tikzpicture}
[curves/.style = {thick},
exceptional/.style = {red, thick}, 
nonnormal/.style ={very thick, blue},
boundary/.style={nonnormal, dashed},
scale = .6]

\begin{scope}[curves]
\draw[boundary] (-2,.5) node[left] {$L_{12}$}  -- (-2, -4);
\draw[boundary] (2,.5) node[right] {$L_{12}$} -- (2, -4);
\draw[nonnormal] (-1.5, -2)-- (-.75, -2) node[below]{$L_{34}$} -- ( 1.5, -2);

\draw[<->, thin] (-1.8, .75) to[bend left] node[above] {\scriptsize glued} ++(3.6,0);

\node  at (0, -4.5) {$Y_{1,3}$};

\node[below left] at (-2,0) {$\alpha$};
\node[left] at (-2,-1.3) {$\beta$};
\node[left] at (-2,-2.8) {$\gamma$};
\node[below right] at (2,0) {$\alpha$};
\node[right] at (2,-1.3) {$\beta$};
\node[right] at (2,-2.8) {$\gamma$};

\begin{scope}[exceptional]
\clip (-2.25, 1) rectangle (2.25, -5);
\draw  (-2.25, 0) -- (2.25, 0);
\draw[ every loop/.style={looseness=40, min distance=40}]
 (0,-2) ++ (-45: .5cm) to[out=160, in=-60] (0, -2) to[out=120, in =60,loop] () to[out=-120, in=20] ++(225:0.5cm);

\draw ( -1.25, -2)++(-45: .25cm) --   ++(135:2cm);
\draw ( -1.25, -2)++(-135: .25cm) -- ( -1.25, -2) to[out = 45, in = 135, looseness = 2]  (2.25, -1.5);

\draw ( 1.25, -2)++(45: .25cm) -- ( 1.25, -2) to[out = -135, in = -45, looseness = 2]  (-2.25, -2.5);
\draw ( 1.25, -2)++(135: .25cm) --   ++(-45:3cm);
\end{scope}
\end{scope}

\draw[->] (3.5, -1.5) to node[above] {resolve} node [below] {deg.\ cusps} ++(4,0);

\begin{scope}[xshift = 9cm, yshift =-1.5cm,  looseness=1.5]
\draw[rounded corners] (-.5, 2) rectangle (4.5, -2);
\node at (2, -3) {$X_{1,3}$};

 \draw[nonnormal, name path = L34] (0,1) to[bend right] (4,.5) node[above] {$L_{34}$};
\draw[boundary, name path = L12] (0,-1) to[bend left] (4,-.5)node[below] {$L_{12}$};
\fill [red, name intersections={of=L12 and L34}]
(intersection-1) circle (3pt) node [above] {$\beta$}
(intersection-2) circle (3pt) node [above]{$\gamma$};
\fill[red] (.3, .75) circle (3pt) node [above right] {$\delta$}
(.3,- .75) circle (3pt) node [below right] {$\alpha$};
\end{scope}
\end{tikzpicture}
\end{figure}

 \begin{rem}\label{rem: visualisation}
 We see that the involution $\tau$ encodes, via the generated equivalence relation, the crucial information that gives the number of degenerate cusps on $X$ and thus $\chi(X)$. In order to visualise this information better, we will usually pass to a partial resolution $\sigma\colon Y \to X$, where we blow up each of the degenerate cusps, which on the normalisation $\bar X$ corresponds to taking a log-resolution of $\bar D$. We illustrate this in Figure \ref{fig: X_13}. Thus, instead of counting degenerate cusps in $X$ or corresponding equivalence classes in $S$, we count cycles of $\sigma$-exceptional rational curves in $Y$.
\end{rem}

\subsection{Plurigenera}
The constancy of the geometric genus in  a stable family is \cite[Cor. 25]{Kollar13} and the constancy of higher plurigenera is implicit in the construction of the moduli space. For lack of explicit reference we include a short proof here.
\begin{prop}\label{prop: plurigenus}
For $m\geq2$ the plurigenera $P_m(X) = h^0(X, \omega_X^{[m]})$ are constant on connected components of the moduli space of stable surface.
\end{prop}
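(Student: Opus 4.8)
The plan is to deduce constancy of the plurigenera $P_m(X)$ for $m\geq 2$ from the existence of a relatively very ample line bundle on the total space of a stable family, together with flatness and base change. Concretely, let $f\colon \mathcal X\to T$ be a stable family over a connected base $T$; by the construction of the moduli functor (work with the reflexive hull of powers of the relative dualising sheaf, or equivalently use that $\omega_{\mathcal X/T}^{[N]}$ is invertible and $f$-ample for some $N$, cf. Kollár's stable reduction/moduli package), one obtains that for $m\geq 2$ the sheaf $\omega_{\mathcal X/T}^{[m]}$ is flat over $T$ and commutes with base change, so that its formation in a fibre recovers $\omega_{X_t}^{[m]}$. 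The first step is therefore to isolate precisely the input needed: that for each $m\geq 2$ the sheaf $F_m:=\omega_{\mathcal X/T}^{[m]}$ is $T$-flat with $(F_m)_t\cong\omega_{X_t}^{[m]}$ for all $t$.

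Granting that, the second step is the standard semicontinuity-and-Euler-characteristic argument. Since the fibres $X_t$ are surfaces with at worst slc singularities, they are Cohen--Macaulay and $\omega_{X_t}^{[m]}$ is a reflexive, hence $S_2$, sheaf; combined with the vanishing $H^i(X_t,\omega_{X_t}^{[m]})=0$ for $i>0$ and $m\geq 2$ (which follows from Kollár-type vanishing for slc surfaces, or can be extracted from the references already in use), one gets $\chi(X_t,\omega_{X_t}^{[m]})=h^0(X_t,\omega_{X_t}^{[m]})=P_m(X_t)$. Now $\chi$ is locally constant in flat families, so $P_m$ is locally constant on $T$; since this holds for every stable family and $T$ was an arbitrary connected base, $P_m$ is constant on connected components of the moduli space. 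Alternatively, and perhaps more cleanly, one can bypass the higher vanishing by invoking Grauert's theorem: $h^0(X_t,F_{m,t})$ is upper semicontinuous, and a further application at $H^1$ together with the constancy of $\chi$ forces all the relevant cohomology to be locally free, giving local constancy of $h^0$ directly.

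The main obstacle is the first step, namely pinning down that $\omega_{\mathcal X/T}^{[m]}$ behaves well in families for all $m\geq 2$ and not merely for the distinguished multiple $N$ appearing in the definition of a stable family. For the Gorenstein (or $\IQ$-Gorenstein with index dividing $m$) case this is immediate, but in general reflexive powers need not commute with base change, and one must use the defining property of Kollár's moduli functor (the ``Kollár condition'' / condition on the relative canonical sheaf being compatible with base change, e.g. via Viehweg-type or Kollár-Shepherd-Barron arguments) precisely to guarantee this. I would cite \cite[Sect.~5.1--5.3]{KollarSMMP} or \cite{Kollar13, KollarModuli} for the statement that these reflexive powers are flat and base-change compatible in a stable family, and then the cohomological bookkeeping above is routine. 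A secondary, minor point to check is that $h^0$ rather than merely $\chi$ is what is being controlled; this is where either the higher vanishing for slc surfaces or the Grauert upper-semicontinuity plus constant-$\chi$ pincer is invoked to rule out jumping.
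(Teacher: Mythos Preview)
Your proposal is correct and follows essentially the same line as the paper's proof: invoke the Koll\'ar condition so that $\omega_{\mathcal X/T}^{[m]}$ is flat and commutes with base change, then use generalised Kodaira vanishing on slc surfaces to identify $P_m(X_t)=\chi(\omega_{X_t}^{[m]})$, which is locally constant. The paper's version is slightly more streamlined in that it reduces at once to a family over a smooth curve and cites \cite[Cor.~19]{LR14} or \cite{Fujino14} for the vanishing, but the substance is identical.
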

\begin{proof}
 Let $\pi\colon \kx \to B$ be a stable family over a smooth curve. By definition \cite{Kollar13} the sheaf $\omega_{\kx/B}^{[m]}$ is flat over $B$ and commutes with base change. Thus $h^0(\kx_b, \omega_{\kx_b}^{[m]})=\chi( \omega_{\kx_b}^{[m]})$ is constant because all higher cohomology groups vanish by the generalised Kodaira vanishing (see \cite[Cor.~19]{LR14} or \cite{Fujino14}).
\end{proof}
\begin{cor}\label{cor: non-smoothable}
Let $X$ be a stable surface. 
\begin{enumerate}
 \item If $X$ is $2$-Gorenstein, then $P_2(X) = \chi(X) + K_X^2$. 
 \item If $P_2(X)\neq \chi(X) + K_X^2$ then the connected component of the moduli space of stable surfaces does not contain a surface with canonical singularities. We say, $X$ does not admit a $\IQ$-Gorenstein smoothing in the large.
\end{enumerate}
\end{cor}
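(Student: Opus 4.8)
The plan is to deduce (1) from Riemann--Roch and then obtain (2) as a formal consequence of Proposition \ref{prop: plurigenus}; the real content sits in (1).

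\emph{Proof of (1).} Since $X$ is $2$-Gorenstein, $\omega_X^{[2]}$ is a line bundle, and exactly as in the proof of Proposition \ref{prop: plurigenus} the generalised Kodaira vanishing (\cite[Cor.~19]{LR14}, \cite{Fujino14}) gives $H^i(X,\omega_X^{[2]})=0$ for $i>0$, so that $P_2(X)=\chi(X,\omega_X^{[2]})$. It remains to prove $\chi(X,\omega_X^{[2]})=\chi(\ko_X)+K_X^2$. If $X$ is normal this is just Riemann--Roch on a normal projective surface applied to the \emph{line} bundle $\omega_X^{[2]}=\ko_X(2K_X)$: no singularity correction terms occur for a line bundle, and $\frac12(2K_X)\cdot(2K_X-K_X)=K_X^2$. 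In general I would pass to the normalisation $\pi\colon\bar X\to X$; as $\omega_X^{[2]}$ is Cartier, its reflexive pullback is the line bundle $\ko_{\bar X}(2K_{\bar X}+2\bar D)$, and combining the conductor exact sequence with Riemann--Roch on $\bar X$ and adjunction along $\bar D$ gives $\chi(X,\omega_X^{[2]})=\chi(\ko_X)+(K_{\bar X}+\bar D)^2=\chi(\ko_X)+K_X^2$, using $(K_{\bar X}+\bar D)^2=K_X^2$ from Proposition \ref{prop: easy glueing}. Equivalently one may substitute $m=2$ directly into the Euler-characteristic formulas \cite[Prop.~3.3, Cor.~3.5]{FPR15a} already invoked there.

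\emph{Proof of (2).} Write $k=K_X^2$ and $l=\chi(X)$, so $X\in\overline{\gothM}_{k,l}$. A surface $X'$ with canonical, i.e.\ Du~Val, singularities is Gorenstein, hence $2$-Gorenstein, so (1) applies and gives $P_2(X')=\chi(X')+K_{X'}^2$. On $\overline{\gothM}_{k,l}$ the invariants $\chi=l$ and $K^2=k$ are fixed, and $P_2$ is constant on connected components by Proposition \ref{prop: plurigenus}. Hence if the connected component of $X$ contained such an $X'$ we would get $P_2(X)=P_2(X')=\chi(X')+K_{X'}^2=l+k=\chi(X)+K_X^2$, contradicting the hypothesis. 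So that component contains no surface with canonical singularities, which is by definition the assertion that $X$ admits no $\IQ$-Gorenstein smoothing in the large.

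The only step that is not routine bookkeeping is the Riemann--Roch identity in (1). For normal $X$ it is immediate, and once $2K_X$ is Cartier the non-normal case should likewise be ``formal''; but one has to pin down the contribution of the non-normal locus $D$ and of the degenerate cusps — equivalently, to check that specialising the \cite{FPR15a} formulas to $m=2$ leaves exactly $\chi(\ko_X)+K_X^2$ with nothing left over. That is the point where I would be careful.
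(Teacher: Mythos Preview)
Your argument is correct and follows the same skeleton as the paper: Kodaira vanishing reduces (1) to a Riemann--Roch computation, and (2) is then formal from Proposition~\ref{prop: plurigenus} applied to a canonical (hence Gorenstein) surface. The only difference is packaging. Where you re-derive the Euler characteristic of $\omega_X^{[2]}$ by hand via the normalisation and conductor sequence, the paper simply invokes the Riemann--Roch formula for \emph{Cartier} divisors on stable surfaces \cite[Thm.~3.1]{LR16}: since $X$ is $2$-Gorenstein, $2K_X$ is Cartier, and that theorem gives $\chi(\omega_X^{[2]})=\chi(\ko_X)+\tfrac12(2K_X)(2K_X-K_X)=\chi(\ko_X)+K_X^2$ directly, with the non-normal bookkeeping already absorbed. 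This is exactly the step you flagged as the one to be careful about, and citing \cite{LR16} disposes of it cleanly.

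Two small cautions about your write-up. First, you quote $(K_{\bar X}+\bar D)^2=K_X^2$ from Proposition~\ref{prop: easy glueing}, but that proposition carries the extra hypothesis that $\bar X$ is smooth near $\bar D$; the identity holds for any stable surface (it is the definition of $K_X^2$ via pullback), so just state it rather than cite that proposition. Second, the formulas \cite[Prop.~3.3, Cor.~3.5]{FPR15a} invoked in Proposition~\ref{prop: easy glueing} compute $\chi(\ko_X)$ and $K_X^2$, not $\chi(\omega_X^{[m]})$ for general $m$, so ``substitute $m=2$'' is not quite the right alternative; the reference you want for that is again \cite[Thm.~3.1]{LR16}.
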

\begin{proof}
 The first item follows from the Riemann-Roch formula for Cartier divisors on stable surfaces \cite[Thm.~3.1]{LR16} applied to $2K_X$ while is a direct consequence of Proposition \ref{prop: easy glueing}. 
\end{proof}

\section{Construction of the examples}\label{sect: Xkl}
Our examples build upon the examples considered in \cite{LR16}, which get infected by spiky non-smoothable and non-Gorenstein components. As explained in Remark \ref{rem: visualisation} we use the partial resolutions for visualisation to keep track of the number of degenerate cusps.
\subsection{The surfaces $X_{k,l}$}
For the convenience of the reader we recall the surfaces $X_{k,l}$ and $Y_{k,l}$ constructed in \cite{LR16} and construct some similar ones.
\subsubsection{The elementary tiles} 
For this we start with the pair $(\IP^2, L_1+L_2+L_3+L_4)$ where the $L_i$ are general lines. We want to glue  $L_3$ to $L_4$ preserving the marked points, which we do on the log-resolution for visualisation purposes.

Consider  $\tilde \IP^2$, the plane blown up in the intersection points of the lines. There are six different ways to  glue $L_3$ to $L_4$ while preserving the intersections with the exceptional divisor, which up to isomorphism (renaming $L_1$ and $L_2$) reduce to four essentially different possibilities.  These are given in Figure~\ref{fig: elementary tiles}. The surfaces have normal crossing singularities along $L_{34}$, the image of $L_3$ and $L_4$.
Note that for esthetic reasons we sometimes use a partly mirrored version of Type A in later figures. 

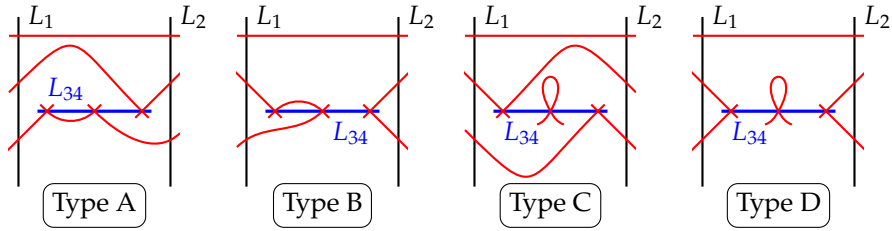
\begin{figure}
\caption{The four different possibilities to glue $L_3$ to $L_4$ up to isomorphism.}
\label{fig: elementary tiles}
\small
\begin{tikzpicture}
[curves/.style = {thick},
exceptional/.style = {red, thick}, 
nonnormal/.style ={very thick, blue},
scale=.5
]

\begin{scope}[curves, xshift = 3cm]
\draw (-2,.5) node[right] {$L_1$}  -- (-2, -4);
\draw (2,.5) node[right] {$L_2$} -- (2, -4);
\draw[nonnormal] (-1.5, -2)-- (-.75, -2) node[below]{$L_{34}$} -- ( 1.5, -2);

\node [draw, rounded corners, thin] at (0, -4.5) {Type D};

\begin{scope}[exceptional]
\clip (-2.25, 1) rectangle (2.25, -5);
\draw  (-2.25, 0) -- (2.25, 0);
\draw[ every loop/.style={looseness=40, min distance=40}]
 (0,-2) ++ (-45: .5cm) to[out=160, in=-60] (0, -2) to[out=120, in =60,loop] () to[out=-120, in=20] ++(225:0.5cm);

\draw ( -1.25, -2)++(-45: .25cm) --   ++(135:2cm);
\draw ( -1.25, -2)++(45: .25cm) --   ++(-135:3cm);

\draw ( 1.25, -2)++(-135: .25cm) --   ++(45:2cm);
\draw ( 1.25, -2)++(135: .25cm) --   ++(-45:3cm);
\end{scope}
\end{scope}

\begin{scope}[curves, xshift=-3cm]
\draw (-2,.5) node[right] {$L_1$}  -- (-2, -4);
\draw (2,.5) node[right] {$L_2$} -- (2, -4);
\draw[nonnormal] (-1.5, -2)-- (-.75, -2) node[below]{$L_{34}$} -- ( 1.5, -2);

\node [draw, rounded corners, thin] at (0, -4.5) {Type C};

\begin{scope}[exceptional]
\clip (-2.25, 1) rectangle (2.25, -5);
\draw  (-2.25, 0) -- (2.25, 0);
\draw[ every loop/.style={looseness=40, min distance=40}]
 (0,-2) ++ (-45: .5cm) to[out=160, in=-60] (0, -2) to[out=120, in =60,loop] () to[out=-120, in=20] ++(225:0.5cm);

\draw ( -1.25, -2)++(-45: .25cm) --   ++(135:2cm);
\draw ( -1.25, -2)++(-135: .25cm) -- ( -1.25, -2) to[out = 45, in = 135, looseness = 2]  (2.25, -1.5);

\draw ( 1.25, -2)++(45: .25cm) -- ( 1.25, -2) to[out = -135, in = -45, looseness = 2]  (-2.25, -2.5);
\draw ( 1.25, -2)++(135: .25cm) --   ++(-45:3cm);
\end{scope}
\end{scope}

\begin{scope}[curves, xshift = -15cm, yshift = 0cm]
\draw (-2,.5) node[right] {$L_1$}  -- (-2, -4);
\draw (2,.5) node[right] {$L_2$} -- (2, -4);
\draw[nonnormal] (-1.5, -2)-- (-.75, -2) node[above]{$L_{34}$} -- ( 1.5, -2);

\node [draw, rounded corners, thin] at (0, -4.5) {Type A};

\draw[exceptional]  (-2.25, 0) -- (2.25, 0);
\begin{scope}[exceptional, cm ={1,0,0,-1,(0,-4)}]
\clip (-2.25, 1) rectangle (2.25, -5);

\draw ( -1.25, -2)++(-135: .25cm) -- ( -1.25, -2) to[out = 45, in = 135] 
 (0, -2)-- ++(-45: .25cm);
\draw ( 0, -2)++(-135: .25cm) -- (0, -2) to[out = 45, in =135] 
 (2.25, -1.42);

\draw ( -1.25, -2)++(-45: .25cm) --   ++(135:2cm);

\draw ( 1.25, -2)++(135: .25cm) --   ++(-45:3cm);
\draw ( 1.25, -2)++(45: .25cm) -- ( 1.25, -2) to[out = -135, in = -45, looseness = 2]  (-2.25, -2.5);
\end{scope}
\end{scope}

\begin{scope}[curves, xshift = -9cm, yshift = 0cm]
\draw (-2,.5) node[right] {$L_1$}  -- (-2, -4);
\draw (2,.5) node[right] {$L_2$} -- (2, -4);
\draw[nonnormal] (-1.5, -2)-- (.75, -2) node[below]{$L_{34}$} -- ( 1.5, -2);

\node [draw, rounded corners, thin] at (0, -4.5) {Type B};

\begin{scope}[exceptional]
\clip (-2.25, 1) rectangle (2.25, -5);
\draw  (-2.25, 0) -- (2.25, 0);

\draw ( -1.25, -2)++(-135: .25cm) -- ( -1.25, -2) to[out = 45, in = 135] 
 (0, -2)-- ++(-45: .25cm);
\draw ( 0, -2)++(45: .25cm) -- (0, -2) to[out = -135, in = 45] 
 (-2.25, -2.95);

\draw ( -1.25, -2)++(-45: .25cm) --   ++(135:2cm);

\draw ( 1.25, -2)++(-135: .25cm) --   ++(45:2cm);
\draw ( 1.25, -2)++(135: .25cm) --   ++(-45:3cm);

\end{scope}
\end{scope}

\end{tikzpicture}
\end{figure}

\subsubsection{Glueing together in a circle}
We now glue the elementary tiles in a circle such that the number of degenerate cusps varies in a big enough range to obtain the stable surfaces $X_{k, l}$. For better visualisation we work with the partial resolution, so first have a look at Figure \ref{fig: Y32}, where the partial resolution, in this case actually the semi-resolution, of $X_{3,2}$ is depicted; it is glued from two tiles of type $B$ and one tile of type $D$. One can directly see the five cycles of exceptional curves on $Y_{3,2}$, leading to five degenerate cusps in $X_{3,2}$. 

 \begin{figure}[hb!]
  \caption{The semi-resolution $Y_{3,2}$ of $X_{3,2}$}\label{fig: Y32}
\begin{tikzpicture}[curves/.style = {thick},
exceptional/.style = {red, thick}, 
nonnormal/.style ={very thick, blue},
boundary/.style={nonnormal, dashed},
scale = .25
]

 \begin{scope}
 \def\order{3} \node at (120/\order : 10cm +12/\order cm) {$Y_{3,2}$};
 \foreach \n in {1,2,3} {
 \begin{scope}[rotate = \n*360/\order]
\draw[thick, black] 
(0:1cm) 
to (0:6cm) 
  to
(180/\order:10cm +12/\order cm)
 to
 (360/\order:6cm)
 to (360/\order:1cm)
  arc [start angle = 360/\order, end angle = 0, radius = 1cm]
;
\foreach \r in {2cm} {
\draw[exceptional]  (0:\r) arc[radius = \r, start angle = 0 , delta angle = 360/\order] ;
}
 \end{scope}
 }
  
 \foreach \n in {1} {
 \begin{scope}[rotate = \n*360/\order]

\begin{scope}[rotate = -90+180/\order, yshift = 3.88cm] 
\draw[exceptional,  every loop/.style={looseness=80, min distance=80}]
 (0,0) ++ (-45: .5cm) to[out=160, in=-60] (0, 0) to[out=120, in =60,loop] () to[out=-120, in=20] ++(225:0.5cm);
 \end{scope}
 
 \coordinate (a) at (90/\order: 4.45); 
 \draw[exceptional] (0:3.5) to (a);
\draw[exceptional] (0:5) to (a);

 \coordinate (b) at (270/\order: 4.45); 
 \draw[exceptional] (360/\order:3.5) to (b);
\draw[exceptional] (360/\order:5) to (b);

 \draw[nonnormal] (300/\order:5cm) to (60/\order:5cm);
\draw[nonnormal] (0:1cm) to (0:6cm);
 \draw[nonnormal] (360/\order:1cm) to (360/\order:6cm);

 \node at (180/\order: 8) {$D$};
\end{scope}
 }

 \foreach \n in {2,3} {
 \begin{scope}[rotate = \n*360/\order]

 \coordinate (a) at (90/\order: 4.45); 
 \coordinate (b) at (270/\order: 4.45); 
 \coordinate (c) at (180/\order: 3.88); 
 
 \draw[exceptional] (0:3.5) to (a);
\draw[exceptional] (0:5) to (a);

\draw[exceptional] (b) to[bend left] (c);

 \draw[exceptional] (360/\order:3.5) to[bend right] (c);
\draw[exceptional] (360/\order:5) to (b);

 \draw[nonnormal] (300/\order:5cm) to (60/\order:5cm);
\draw[nonnormal] (0:1cm) to (0:6cm);
 \draw[nonnormal] (360/\order:1cm) to (360/\order:6cm);

 \node at (180/\order: 8) {$B$};
\end{scope}
} 

\end{scope}
\end{tikzpicture}
\end{figure}
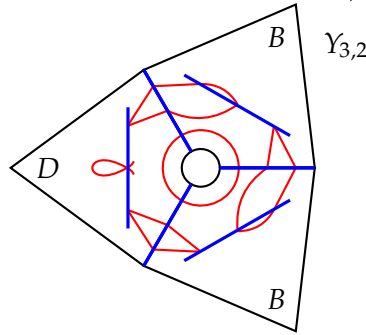

We give the other partial resolutions $Y_{k,l}$ in Figure \ref{fig: chi small} and in Figure \ref{fig: chi big}.

Recall from \cite{LR16} that $X_{k,l}$ is a locally smoothable Gorenstein stable surface with invariants
\[ K_{X_{k,l}}^2 = k \text{ and } \chi(X_{k,l}) = l.\]
The invariants can also be computed easily from Proposition \ref{prop: easy glueing}.

\begin{figure}[h!]\caption{The surface $Y_{k,l}$ for $1-k<l\leq 1 $ glued from $1-l$ tiles of type $A$ and $k+l-1$ tiles of type $B$. }\label{fig: chi small}
\scriptsize
 \begin{tikzpicture}
[curves/.style = {thick},
exceptional/.style = {red, thick}, 
nonnormal/.style ={very thick, blue},
boundary/.style={nonnormal, dashed},
scale = .5
]

\begin{scope}[curves]
\draw[boundary] (-16,.5) node[right] {$L_{12}$}  -- ++(0, -4.5);
\draw[boundary] (8,.5) node[right] {$L_{12}$} -- ++(0, -4.5);
\foreach \x in {-15.5, -9.5,-5.5,  -1.5,4.5} {\draw[nonnormal] (\x, -2) -- ++(3,0);};
\foreach \x in {-12, -10, -6, -2,2, 4} {\draw[nonnormal] ( \x, .5) -- ++(0, -4.5);};
\end{scope}

\begin{scope}[exceptional, thin, dotted]
\foreach \y in {0, -1.3,-2.8} {\draw( 1.75, \y) -- ++(2.5, 0);};
\draw( -12, 0) -- ++(4, 0);
\draw ( -12, -1.3)++(135: .25cm) -- ( -12, -1.3) to[out = -45, in = 45] 
 (-12, -2.8)-- ++(-135: .25cm);
\draw ( -10, -1.3)++(45: .25cm) -- ( -10, -1.3) to[out = -135, in = 135] 
 (-10, -2.8)-- ++(-45: .25cm);
\node[black] at (-11, -2) {\dots}; 
\end{scope}

\begin{scope}[xshift=-14cm]
\node[below left] at (-2,0) {$\alpha$};
\node[left] at (-2,-1.3) {$\beta$};
\node[left] at (-2,-2.8) {$\gamma$};
\end{scope}
\begin{scope}[xshift = 6cm]
\node[below right] at (2,0) {$\alpha$};
\node[right] at (2,-1.3) {$\beta$};
\node[right] at (2,-2.8) {$\gamma$};
\end{scope}

\begin{scope}[exceptional, xshift = -14cm]
\clip (-2.25, 1) rectangle (2.25, -5);
\draw  (-2.25, 0) -- (2.25, 0);

\draw ( -1.25, -2)++(-135: .25cm) -- ( -1.25, -2) to[out = 45, in = 135]  (0, -2)-- ++(-45: .25cm);
\draw ( 0, -2)++(45: .25cm) -- (0, -2) to[out = -135, in = 45]  (-2.25, -2.95);

\draw ( -1.25, -2)++(-45: .25cm) --   ++(135:2cm);

\draw ( 1.25, -2)++(-135: .25cm) --   ++(45:2cm);
\draw ( 1.25, -2)++(135: .25cm) --   ++(-45:3cm);

\end{scope}

\begin{scope}[exceptional, xshift  =-8cm]
\clip (-2.25, 1) rectangle (2.25, -5);
\draw  (-2.25, 0) -- (2.25, 0);

\draw ( -1.25, -2)++(-135: .25cm) -- ( -1.25, -2) to[out = 45, in = 135] 
 (0, -2)-- ++(-45: .25cm);
\draw ( 0, -2)++(45: .25cm) -- (0, -2) to[out = -135, in = 45] 
 (-2.25, -2.95);

\draw ( -1.25, -2)++(-45: .25cm) --   ++(135:2cm);

\draw ( 1.25, -2)++(-135: .25cm) --   ++(45:2cm);
\draw ( 1.25, -2)++(135: .25cm) --   ++(-45:3cm);

\end{scope}
\begin{scope}[exceptional, xshift =-4cm]
\clip (-2.25, 1) rectangle (2.25, -5);
\draw  (-2.25, 0) -- (2.25, 0);

\draw ( -1.25, -2)++(-135: .25cm) -- ( -1.25, -2) to[out = 45, in = 135] 
 (0, -2)-- ++(-45: .25cm);
\draw ( 0, -2)++(45: .25cm) -- (0, -2) to[out = -135, in = 45] 
 (-2.25, -2.95);

\draw ( -1.25, -2)++(-45: .25cm) --   ++(135:2cm);

\draw ( 1.25, -2)++(-135: .25cm) --   ++(45:2cm);
\draw ( 1.25, -2)++(135: .25cm) --   ++(-45:3cm);

\end{scope}

\draw[exceptional]  (-2.25, 0) -- (2.25, 0);
\begin{scope}[exceptional, cm ={1,0,0,-1,(0,-4)}]
\clip (-2.25, 1) rectangle (2.25, -5);

\draw ( -1.25, -2)++(-135: .25cm) -- ( -1.25, -2) to[out = 45, in = 135] 
 (0, -2)-- ++(-45: .25cm);
\draw ( 0, -2)++(-135: .25cm) -- (0, -2) to[out = 45, in =135] 
 (2.25, -1.42);

\draw ( -1.25, -2)++(-45: .25cm) --   ++(135:2cm);

\draw ( 1.25, -2)++(135: .25cm) --   ++(-45:3cm);
\draw ( 1.25, -2)++(45: .25cm) -- ( 1.25, -2) to[out = -135, in = -45, looseness = 2]  (-2.25, -2.5);
\end{scope}
\begin{scope}[exceptional, xshift =6cm]
\clip (-2.25, 1) rectangle (2.25, -5);
\draw  (-2.25, 0) -- (2.25, 0);

\draw ( -1.25, -2)++(-135: .25cm) -- ( -1.25, -2) to[out = 45, in = 135] 
 (0, -2)-- ++(-45: .25cm);
\draw ( 0, -2)++(-135: .25cm) -- (0, -2) to[out = 45, in =135] 
 (2.25, -1.42);

\draw ( -1.25, -2)++(-45: .25cm) --   ++(135:2cm);

\draw ( 1.25, -2)++(135: .25cm) --   ++(-45:3cm);
\draw ( 1.25, -2)++(45: .25cm) -- ( 1.25, -2) to[out = -135, in = -45, looseness = 2]  (-2.25, -2.5);
\end{scope}

\end{tikzpicture}
\end{figure}
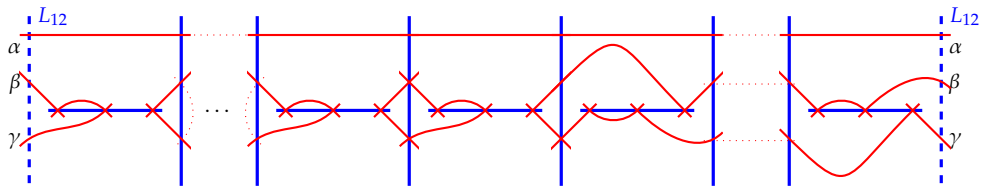

\begin{figure}[h!]\caption{The surface $Y_{k,l}$ for $2\leq l\leq k+1 $ glued from $l-1$ tiles of type $D$ and $k-l+1$ tiles of type $B$. }\scriptsize\label{fig: chi big}
 \begin{tikzpicture}
[curves/.style = {thick},
exceptional/.style = {red, thick}, 
nonnormal/.style ={very thick, blue},
boundary/.style={nonnormal, dashed},
scale = .5
]

\begin{scope}[curves]
\draw[boundary] (-16,.5) node[right] {$L_{12}$}  -- ++(0, -4.5);
\draw[boundary] (8,.5) node[right] {$L_{12}$} -- ++(0, -4.5);
\foreach \x in {-15.5, -9.5,-5.5,  -1.5,4.5} {\draw[nonnormal] (\x, -2) -- ++(3,0);};
\foreach \x in {-12, -10, -6, -2,2, 4} {\draw[nonnormal] ( \x, .5) -- ++(0, -4.5);};
\end{scope}

\begin{scope}[exceptional, thin, dotted]
\draw( -12, 0) -- ++(2, 0);
\draw ( -12, -1.3)++(135: .25cm) -- ( -12, -1.3) to[out = -45, in = 45] 
 (-12, -2.8)-- ++(-135: .25cm);
\draw ( -10, -1.3)++(45: .25cm) -- ( -10, -1.3) to[out = -135, in = 135] 
 (-10, -2.8)-- ++(-45: .25cm);
\draw( 2, 0) -- ++(2, 0);
\draw ( 2, -1.3)++(135: .25cm) -- ( 2, -1.3) to[out = -45, in = 45] 
 (2, -2.8)-- ++(-135: .25cm);
\draw ( 4, -1.3)++(45: .25cm) -- ( 4, -1.3) to[out = -135, in = 135] 
 (4, -2.8)-- ++(-45: .25cm);
\node[black] at (-11, -2) {\dots}; 
\node[black] at (3, -2) {\dots}; 

\end{scope}

\begin{scope}[xshift=-14cm]
\node[below left] at (-2,0) {$\alpha$};
\node[left] at (-2,-1.3) {$\beta$};
\node[left] at (-2,-2.8) {$\gamma$};
\end{scope}
\begin{scope}[xshift = 6cm]
\node[below right] at (2,0) {$\alpha$};
\node[right] at (2,-1.3) {$\beta$};
\node[right] at (2,-2.8) {$\gamma$};
\end{scope}

\begin{scope}[exceptional, xshift = -14cm]
\clip (-2.25, 1) rectangle (2.25, -5);
\draw  (-2.25, 0) -- (2.25, 0);

\draw ( -1.25, -2)++(-135: .25cm) -- ( -1.25, -2) to[out = 45, in = 135]  (0, -2)-- ++(-45: .25cm);
\draw ( 0, -2)++(45: .25cm) -- (0, -2) to[out = -135, in = 45]  (-2.25, -2.95);

\draw ( -1.25, -2)++(-45: .25cm) --   ++(135:2cm);

\draw ( 1.25, -2)++(-135: .25cm) --   ++(45:2cm);
\draw ( 1.25, -2)++(135: .25cm) --   ++(-45:3cm);

\end{scope}

\begin{scope}[exceptional, xshift  =-8cm]
\clip (-2.25, 1) rectangle (2.25, -5);
\draw  (-2.25, 0) -- (2.25, 0);

\draw ( -1.25, -2)++(-135: .25cm) -- ( -1.25, -2) to[out = 45, in = 135] 
 (0, -2)-- ++(-45: .25cm);
\draw ( 0, -2)++(45: .25cm) -- (0, -2) to[out = -135, in = 45] 
 (-2.25, -2.95);

\draw ( -1.25, -2)++(-45: .25cm) --   ++(135:2cm);

\draw ( 1.25, -2)++(-135: .25cm) --   ++(45:2cm);
\draw ( 1.25, -2)++(135: .25cm) --   ++(-45:3cm);

\end{scope}
\begin{scope}[exceptional, xshift =-4cm]
\clip (-2.25, 1) rectangle (2.25, -5);
\draw  (-2.25, 0) -- (2.25, 0);
\draw[ every loop/.style={looseness=40, min distance=40}]
 (0,-2) ++ (-45: .5cm) to[out=160, in=-60] (0, -2) to[out=120, in =60,loop] () to[out=-120, in=20] ++(225:0.5cm);

\draw ( -1.25, -2)++(-45: .25cm) --   ++(135:2cm);
\draw ( -1.25, -2)++(45: .25cm) --   ++(-135:3cm);

\draw ( 1.25, -2)++(-135: .25cm) --   ++(45:2cm);
\draw ( 1.25, -2)++(135: .25cm) --   ++(-45:3cm);
\end{scope}

\begin{scope}[exceptional,]
\clip (-2.25, 1) rectangle (2.25, -5);
\draw  (-2.25, 0) -- (2.25, 0);
\draw[ every loop/.style={looseness=40, min distance=40}]
 (0,-2) ++ (-45: .5cm) to[out=160, in=-60] (0, -2) to[out=120, in =60,loop] () to[out=-120, in=20] ++(225:0.5cm);

\draw ( -1.25, -2)++(-45: .25cm) --   ++(135:2cm);
\draw ( -1.25, -2)++(45: .25cm) --   ++(-135:3cm);

\draw ( 1.25, -2)++(-135: .25cm) --   ++(45:2cm);
\draw ( 1.25, -2)++(135: .25cm) --   ++(-45:3cm);
\end{scope}
\begin{scope}[exceptional, xshift =6cm]
\clip (-2.25, 1) rectangle (2.25, -5);
\draw  (-2.25, 0) -- (2.25, 0);
\draw[ every loop/.style={looseness=40, min distance=40}]
 (0,-2) ++ (-45: .5cm) to[out=160, in=-60] (0, -2) to[out=120, in =60,loop] () to[out=-120, in=20] ++(225:0.5cm);

\draw ( -1.25, -2)++(-45: .25cm) --   ++(135:2cm);
\draw ( -1.25, -2)++(45: .25cm) --   ++(-135:3cm);

\draw ( 1.25, -2)++(-135: .25cm) --   ++(45:2cm);
\draw ( 1.25, -2)++(135: .25cm) --   ++(-45:3cm);
\end{scope}

\end{tikzpicture}
\end{figure}
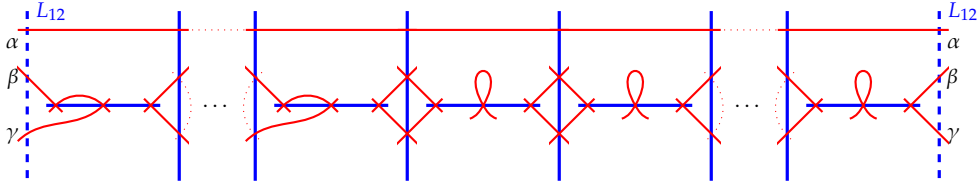

\subsubsection{Glueing together on a stick}

Let us construct a family of cousins $X_{k, k+2}$ of the surfaces $X_{k,l}$, where instead of glueing in a circle we pinch two of the lines. In other words, the involution $\tau$ will fix two of the lines $L_1$ and $L_2$.
Then $\tau|_{L_i}$ has two fixed points and, since $|S\cap L_i|=3$ and $\tau$ has to preserve this intersection, one of these fixed points is the preimage of a node.
In our picture of the partial resolution $Y_{k, k+2}$ in Figure \ref{fig: on a stick} we will thus see one string of rational curves ending in  pinch points, which then contracts to a $\IZ/2$-quotient of a degenerate cusp by the classification of slc singularities (compare \cite{KollarSMMP}).
Counting the remaining cycles of rational curves we see that $X_{k, k+2}$
has $2k+1$ degenerate cusps and from Proposition \ref{prop: easy glueing} we compute
 $K_{X_{k, k+2}}^2 = k$ and
  \[ \chi(X_{k, k+2}) =  k\cdot 1 - \frac k2 \cdot(-2) -\frac k2 \cdot 6 +(2k+1)+\frac44=k+2,\]
as suggested by the notation.

\begin{figure}[ht!]\caption{The surface $Y_{k, k+2}$ glued from $k$ tiles of type $D$; the lines $L_1$ and $L_2$ are pinched. }
\label{fig: on a stick}
\scriptsize
 \begin{tikzpicture}
[curves/.style = {thick},
exceptional/.style = {red, thick}, 
nonnormal/.style ={very thick, blue},
boundary/.style={nonnormal, dashed},
scale = .5
]

\begin{scope}[curves]
\foreach \x in { -9.5,  -1.5} {\draw[nonnormal] (\x, -2) -- ++(3,0);};
\foreach \x in {-10,  -6, -2,2} {\draw[nonnormal] ( \x, .5) -- ++(0, -4.5);};
\draw[nonnormal, thin] (-5.5, -2) -- ++(3,0);
\end{scope}

\begin{scope}[exceptional, xshift  =-8cm]
\clip (-2.25, 1) rectangle (2.25, -5);
\draw  (-2.25, 0) -- (2.25, 0);
\draw[ every loop/.style={looseness=40, min distance=40}]
 (0,-2) ++ (-45: .5cm) to[out=160, in=-60] (0, -2) to[out=120, in =60,loop] () to[out=-120, in=20] ++(225:0.5cm);

\draw ( -2, -2)++(-135: .25cm) -- ( -2, -2) to[out = 45, in = 135]  ++(1, 0)-- ++(-45: .25cm);
\draw ( -2, -2)++(135: .25cm) -- ( -2, -2) to[out = -45, in = -135]  ++(1, 0)-- ++(45: .25cm);

\draw ( 1.25, -2)++(-135: .25cm) --   ++(45:2cm);
\draw ( 1.25, -2)++(135: .25cm) --   ++(-45:3cm);
\end{scope}
\begin{scope}[exceptional, xshift =-4cm, thin, dotted]
\clip (-2.25, 1) rectangle (2.25, -5);
\draw  (-2.25, 0) -- (2.25, 0);
\draw[ every loop/.style={looseness=40, min distance=40}]
 (0,-2) ++ (-45: .5cm) to[out=160, in=-60] (0, -2) to[out=120, in =60,loop] () to[out=-120, in=20] ++(225:0.5cm);

\draw ( -1.25, -2)++(-45: .25cm) --   ++(135:2cm);
\draw ( -1.25, -2)++(45: .25cm) --   ++(-135:3cm);

\draw ( 1.25, -2)++(-135: .25cm) --   ++(45:2cm);
\draw ( 1.25, -2)++(135: .25cm) --   ++(-45:3cm);
\end{scope}
\begin{scope}[exceptional]
\clip (-2.25, 1) rectangle (2.25, -5);
\draw  (-2.25, 0) -- (2.25, 0);
\draw[ every loop/.style={looseness=40, min distance=40}]
 (0,-2) ++ (-45: .5cm) to[out=160, in=-60] (0, -2) to[out=120, in =60,loop] () to[out=-120, in=20] ++(225:0.5cm);

\draw ( -1.25, -2)++(-45: .25cm) --   ++(135:2cm);
\draw ( -1.25, -2)++(45: .25cm) --   ++(-135:3cm);

\draw ( 2, -2)++(-45: .25cm) -- ( 2, -2) to[in = 45, out = 135]  ++(-1, 0)-- ++(-135: .25cm);
\draw ( 2, -2)++(45: .25cm) -- ( 2, -2) to[in = -45, out = -135]  ++(-1, 0)-- ++(135: .25cm);
\end{scope}

\fill[black] (-10, 0) circle (3pt) node[above left] {$L_1$};
\fill[black] (2, 0) circle (3pt) node[above right] {$L_2$};

\fill[black] (-10, -3.5) circle (3pt);
\fill[black] (2, -3.5) circle (3pt) node[right] {\scriptsize pinch point};

\end{tikzpicture}
\end{figure}
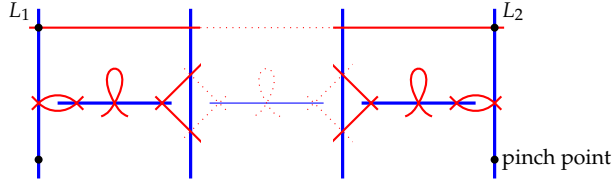

 \subsubsection{The surface $X_{2,4}$ (alternative construction)}
 We add one more surface into the mix. Take two copies of the elementary tile $(\IP^2, L_1+\dots+L_4)$ and glue them by identifying the the two configurations of lines to get a surface $X_{2,4}$. Exchanging the components defines an involution, so more concretely 
 \[ X_{2,4} = \{ z^2-(x_1x_2x_3(x_1+x_2+x_3))^2=0\}\subset \IP(1,1,1,4)\]
 is a double cover of $\IP^2$ branched over a (non-reduced) octic. It has $K_{X_{2,4}}^2 = 2$ and $\chi(X_{2,4}) = 4$. Gorenstein stable surfaces with these invariants have been studies thorougly in \cite{anthes19}.
 
\subsection{The virus surface and the infection mechanism}
We will now introduce the virus surface, which can infect the surfaces $X_{k,l}$ to make them non-$\IQ$-Gorenstein smoothable surfaces.

\subsubsection{The elementary tile $V$}
Let $V = \IP(1,1,3)$, which has a quotient singularity of type $\frac 13(1,1)$. Taking the embedding with $\ko(3)$ we realise $V\subset \IP^4$ as the cone over a rational normal curve of degree $3$. Now let $C_1, C_2$ be general hyperplane sections of $V$. Then $C_i$ is a smooth  rational curve and the two curves intersect in three points. The pair $(V, C_1+C_2)$ is a log-canonical, because the quotient singularity is log terminal and the boundary is nodal. It is $3$-Gorenstein and has invariants 
\[ (K_V+C_1+C_2)^2 = \frac 13, \quad \chi(V) = 1.\]

We depict $V$ and $\tilde V$, the blow up of $V$ at the intersection points of $C_1$ and $C_2$, in Figure \ref{fig: V and tilde V}.
\begin{figure}[hb]
 \caption{The virus surface $V$ and its blow-up.}
 \label{fig: V and tilde V}
 \begin{tikzpicture}
[curves/.style = {thick},
exceptional/.style = {red, thick}, 
nonnormal/.style ={very thick, blue},
boundary/.style={nonnormal, thick},
scale = .45
]

\begin{scope}%
 \def\order{3}
 \node at (120/\order : 10cm +12/\order cm) {$\tilde V$};
 \begin{scope}
\draw[thick, black, fill = black!20!white, rounded corners] 
(0:1cm) 
to (0:6cm) 
  to[out = 90, in =180/\order+180] 
(180/\order:10cm +12/\order cm) node[right] {$\frac13(1,1)$}
 to[out = 180/\order+180, in = 360/\order-90]
 (360/\order:6cm)
 to (360/\order:1cm)
  arc [start angle = 360/\order, end angle = 0, radius = 1cm];
 \end{scope}

\foreach \r in {2cm, 3.5cm, 5cm} 
{
\draw[exceptional]  (20:\r) arc[radius = \r, start angle = 20 , delta angle = 80] ;
}
\foreach \phi in {30,90} 
{
\draw[boundary]  (\phi:1.5) --(\phi:5.5) ;
}
 \end{scope}
\draw[->] (-3.5, 3) to node [above] {blow up} node[below]{$C_1 \cap C_2$} ++(-4, 0); 
\begin{scope}[xshift  = -15cm]
 \def\order{3}
 \node at (120/\order : 10cm +12/\order cm) {$V$};
 \begin{scope}
\draw[thick, black, fill = black!20!white, rounded corners] 
(0,0) 
to (0:6cm) 
  to[out = 90, in =180/\order+180] 
(180/\order:10cm +12/\order cm) node[right] {$\frac13(1,1)$}
 to[out = 180/\order+180, in = 360/\order-90]
 (360/\order:6cm)
 to (0,0);
 \end{scope}
\foreach \phi in {30,90} 
{\def\step{35}
\def\a{2}
\draw[boundary, every to/.style={in = 240, out = 60}]  (80:1.5)  to ++(60-\step:\a) to ++(60+\step:\a) to ++(60-\step:\a) node[above] {$C_1$};
\draw[boundary, every to/.style={in = 240, out = 60}]  (40:1.5) node[right] {$C_2$} to  ++(60+\step:\a) to ++(60-\step:\a) to ++(60+\step:\a);
}

 \end{scope}

\end{tikzpicture}
\end{figure}

\subsubsection{The infection mechanism}
Our virus surface pair $(V, C_1+C_2)$ can infect  non-normal stable surfaces which present suitable attachment points. This is easier to visualise on the partial resolution, see Figure \ref{fig: elementary infection}.
\begin{figure}[h!] 
 \caption{Elementary infection viewed on the partial resolution}\label{fig: elementary infection}
\begin{tikzpicture}[curves/.style = {thick},
exceptional/.style = {red, thick}, 
nonnormal/.style ={very thick, blue},
boundary/.style={nonnormal, dashed},
scale = .4
]

\begin{scope}[rotate = 90]
 
\begin{scope}[curves] 
\foreach \h in {1cm, 2.5cm, 4cm} 
{
\draw[exceptional]  (0,\h) to ++(-1,0) ;
\draw[exceptional, dotted]  (-1,\h) to ++(-1,0) ;
}
\draw[dotted] (-3, -1) to (0,0);
\draw  (-2, -2/3) to (0,0);
\draw[dotted] (-3, 6) to (0, 5);
\draw(-2, 17/3) to (0,5);
\draw[nonnormal] (0,0) to (0,5);
\end{scope}

\begin{scope}[curves,  cm ={-1,0,0,1,(0,0)}]
\foreach \h in {1cm, 2.5cm, 4cm} 
{
\draw[exceptional]  (0,\h) to ++(-1,0) ;
\draw[exceptional, dotted]  (-1,\h) to ++(-1,0) ;
}
\draw[dotted] (-3, -1) to (0,0);
\draw  (-2, -2/3) to (0,0);
\draw[dotted] (-3, 6) to (0, 5);
\draw(-2, 17/3) to (0,5);
\draw[nonnormal] (0,0) to (0,5);
\end{scope}

\end{scope}

\draw[->, dashed] (2,0) to node[above] {glue in $\tilde V$} ++ (4,0);

 \begin{scope}[xshift=7cm, rotate = -90]
 \def\order{12} 

 \begin{scope}[rotate = 75]
\draw[thick, black, fill = black!20!white] 
(0:1cm) 
to (0:6cm) 
  to[out = 90, in =180/\order+180] 
(180/\order:10cm +12/\order cm)
 to[out = 180/\order+180, in = 360/\order-90]
 (360/\order:6cm)
 to (360/\order:1cm)
  arc [start angle = 360/\order, end angle = 0, radius = 1cm]
;
 \draw[nonnormal] (0:1cm) to (0:6cm);
 \draw[nonnormal] (360/\order:1cm) to (360/\order:6cm);
\foreach \r in {2cm, 3.5cm, 5cm} 
{
\draw[exceptional]  (0:\r) arc[radius = \r, start angle = 0 , delta angle = 360/\order] ;
}
 \end{scope}

\begin{scope}[curves , rotate=15,   yshift = 1cm] 
\foreach \h in {1cm, 2.5cm, 4cm} 
{
\draw[exceptional]  (0,\h) to ++(-1,0) ;
\draw[exceptional, dotted]  (-1,\h) to ++(-1,0) ;
}
\draw[dotted] (-3, -1) to (0,0);
\draw  (-2, -2/3) to (0,0);
\draw[dotted] (-3, 6) to (0, 5);
\draw(-2, 17/3) to (0,5);
\draw[nonnormal] (0,0) to (0,5);
\end{scope}

\begin{scope}[curves, rotate=-15,   cm ={-1,0,0,1,(0,1)}]
\foreach \h in {1cm, 2.5cm, 4cm} 
{
\draw[exceptional]  (0,\h) to ++(-1,0) ;
\draw[exceptional, dotted]  (-1,\h) to ++(-1,0) ;
}
\draw[dotted] (-3, -1) to (0,0);
\draw  (-2, -2/3) to (0,0);
\draw[dotted] (-3, 6) to (0, 5);
\draw(-2, 17/3) to (0,5);
\draw[nonnormal] (0,0) to (0,5);
\end{scope}
 
\end{scope}
\end{tikzpicture}
 \end{figure}
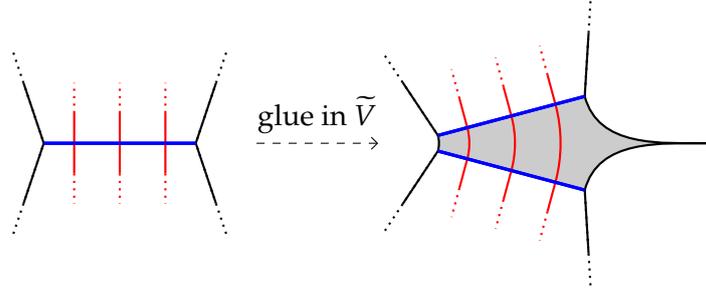

In more technical terms we can describe the elementary infection as follows:
Fix one and for all an involution $\sigma$ on the curve $C_1+C_2$ preserving the nodes. 
\begin{prop}\label{prop: infection}
 Let $X$ be a connected stable surface with the notation as in \eqref{diagr: pushout} and Proposition \ref{prop: easy glueing}.
Assume that $\bar D^\nu$ contains two components $\bar D^\nu_1\isom \IP^1\isom \bar D^\nu_2$ such that $\tau(\bar D_1) = \bar D_2$ and both contain $3$ marked points, that is, $|S\cap \bar D_i |=3$.
Now define an involution $\tilde \tau$ on $\bar D ^\nu \sqcup C_1\sqcup C_2$ by
\[ \tilde \tau|_{\bar D^\nu \setminus \bar D^\nu_1\setminus \bar D^\nu_2} = \tau, \quad \tilde \tau(\bar D^\nu_i ) = C_i\]
chosen such that $\tilde \tau$ preserves the marked points and  on $\bar D_i^\nu$ we have $\tilde\tau \circ \sigma \circ\tilde \tau = \tau$. 

Then $(\bar X \sqcup V, \bar D\sqcup C_1+C_2, \tilde \tau)$ defines a connected stable surface with the same number $\mu_1$ of degenerate cusps as $X$. We denote this surface with $V\infect X$. 
\end{prop}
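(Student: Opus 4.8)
The plan is to check that the glued datum is admissible for Koll\'ar's theorem exactly as in Proposition \ref{prop: easy glueing}, then to identify the resulting surface as stable, and finally to translate the effect of the infection into a purely combinatorial operation on the strata of the conductor. The only feature not already present in Proposition \ref{prop: easy glueing} is that $V=\IP(1,1,3)$ is singular; but its unique singular point, the cone vertex of type $\tfrac13(1,1)$, is missed by the general hyperplane sections $C_1,C_2$, so $V$ is smooth along $C_1+C_2$ and locally along the whole conductor $\bar D\sqcup C_1+C_2$ we are again looking at a nodal curve on a smooth surface.

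First I would pin down $\tilde\tau$. Label the three marked points of $\bar D_1^\nu$ by $a_1,a_2,a_3$, put $b_i:=\tau(a_i)$, and write $p_k\in C_1$, $q_k\in C_2$ for the two preimages of the $k$-th node of $C_1+C_2$, so that $\bar\nu(p_k)=\bar\nu(q_k)$; we may take $\sigma$ to be the involution exchanging $C_1$ and $C_2$ and the two branches at each node, i.e.\ $\sigma(p_k)=q_k$. Choosing $\tilde\tau|_{\bar D_1^\nu}\colon\bar D_1^\nu\isom C_1$ freely, say $a_i\mapsto p_i$, the requirement $\tilde\tau\circ\sigma\circ\tilde\tau=\tau$ on $\bar D_1^\nu$ forces $\tilde\tau|_{\bar D_2^\nu}\colon\bar D_2^\nu\isom C_2$ to be the unique isomorphism with $b_i\mapsto q_i$, while $\tilde\tau|_{C_1}$ and $\tilde\tau|_{C_2}$ are the inverse isomorphisms. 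One checks this is an involution carrying marked points to marked points, hence preserving $\tilde S:=\bar S\sqcup\{p_1,p_2,p_3,q_1,q_2,q_3\}$ and so the different of the pair. Then Koll\'ar's gluing theorem \cite[Thm.~5.13]{KollarSMMP} applies verbatim — both $(\bar X,\bar D)$, the normalisation of the slc surface $X$, and $(V,C_1+C_2)$ being log canonical — and produces a demi-normal projective surface $V\infect X$ with slc singularities, normalisation $\bar X\sqcup V$, and with $\omega_{V\infect X}^{[m]}$ pulling back to $\omega_{\bar X\sqcup V}^{[m]}\bigl(m(\bar D\sqcup C_1+C_2)\bigr)$.

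Stability I would check by pulling the canonical class back to the normalisation: on $\bar X$ it is $\pi^*K_X$, ample because $X$ is stable and $\pi$ finite, and on $V$ it is $K_V+C_1+C_2=\ko_V(-5)+\ko_V(3)+\ko_V(3)=\ko_V(1)$, ample on $\IP(1,1,3)$; since ampleness descends along the finite surjection $\bar X\sqcup V\to V\infect X$, the canonical class of $V\infect X$ is ample $\IQ$-Cartier, so $V\infect X$ is stable. Its invariants then drop out of Proposition \ref{prop: easy glueing}: $K_{V\infect X}^2=K_X^2+\tfrac13$, and, using $\chi(V)=1$, $\chi(C_1+C_2)=1$ and $\tilde\rho=\rho$, one gets $\chi(V\infect X)=\chi(X)-1$ once the cusp count below is known.

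The heart of the argument, and the step I expect to cost the most care, is the count of degenerate cusps. By Proposition \ref{prop: easy glueing} this number is the number of classes, not meeting a fixed point of $\tilde\tau$, of the relation on $\tilde S$ generated by $x\sim\tilde\tau x$ and $\bar\nu(x)=\bar\nu(y)$, and I want to compare it with the analogous relation on $\bar S$ for $X$. Away from $a_1,a_2,a_3,b_1,b_2,b_3$ the maps $\tilde\tau,\bar\nu$ agree with $\tau,\bar\nu$, and the $\bar\nu$-partners of the $a_i,b_i$ are unchanged, so the only modifications are: the three edges $a_i\sim b_i$ disappear; six new vertices $p_i,q_i$ appear, with new edges $a_i\sim p_i$, $b_i\sim q_i$ from $\tilde\tau$ and $p_i\sim q_i$ from $\bar\nu$ on $C_1+C_2$. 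The crucial point — and this is exactly where the condition $\tilde\tau\circ\sigma\circ\tilde\tau=\tau$ is used — is that with the above labelling the node $\bar\nu(p_i)=\bar\nu(q_i)$ is precisely the one linking $a_i$ to $b_i=\tau(a_i)$, so the net effect is that each edge $a_i\sim b_i$ is \emph{subdivided} into a path $a_i\sim p_i\sim q_i\sim b_i$ through two new vertices of degree two, the three subdivided edges being vertex-disjoint because $\bar D_1^\nu\neq\bar D_2^\nu$. A subdivision changes neither the number of connected components nor which components contain a fixed point (the inserted vertices are not $\tilde\tau$-fixed, and the fixed points of $\tau$ survive untouched), so the number of degenerate cusps is unchanged; and the same subdivision picture on the dual graph of irreducible components — each gluing edge of $X$ between the components carrying $\bar D_1^\nu$ and $\bar D_2^\nu$ now running through the new vertex $V$ — shows $V\infect X$ is connected because $X$ is. The real obstacle is to carry out this last translation rigorously: one must make sure that possible coincidences among the equivalence classes of the $a_i$, and any incidences of $\bar D_1^\nu,\bar D_2^\nu$ with the rest of $\bar D$, really leave a clean subdivision rather than a more complicated rewiring, and that the fixed choice of $\sigma$ is compatible with an arbitrary admissible $\tau$; the partial resolution in Figure \ref{fig: elementary infection} is the guide here.
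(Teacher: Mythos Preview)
Your proof is correct and follows essentially the same idea as the paper's: the paper simply observes that on the partial resolution ``each exceptional curve on $V$ just increases the length of one of the existing cycles or chains by $1$'', and your subdivision argument on the equivalence relation on $\tilde S$ is precisely the combinatorial translation of that picture. You are more explicit than the paper about why the glued surface is stable (computing $K_V+C_1+C_2=\ko_V(1)$) and about why the compatibility $\tilde\tau\circ\sigma\circ\tilde\tau=\tau$ forces a clean subdivision rather than a rewiring, which is a genuine gain in rigour.

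One tangential arithmetic slip, not affecting the proposition itself: $C_1+C_2$ is two copies of $\IP^1$ meeting in three nodes, so $\chi(\ko_{C_1+C_2})=2-3=-1$, not $1$; plugging this into Proposition~\ref{prop: easy glueing} gives $\chi(V\infect X)-\chi(X)=1+\tfrac12-\tfrac32=0$, in agreement with Corollary~\ref{cor: infected surface}, rather than $-1$.
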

\begin{proof}
 All claims are selfevident when looking at the pictures of the partial resolution, because each exceptional curve on $V$ just increases the length of one of the existing cycles or chains by $1$. Note that, by our choice of $\tilde \tau$, no new connections between cycles are created.
\end{proof}
\begin{rem}\label{rem: choices}
 If we  use elementary infection $a$ times we will denote the resulting surface with $aV\infect X$.
 In each step there are possibly many places where the infection can take place giving rise to different surface, as  illustrated by Figure \ref{fig: choices}.
 Nevertheless, for our purpose, these choices are irrelevant. \end{rem}

 \begin{figure}[h!b]
  \caption{Two different triple  infections $3V\infect X_{3,2}$ of $X_{3, 2}$ (compare Figure \ref{fig: Y32}).}
\label{fig: choices}
  \begin{tikzpicture}[curves/.style = {thick},
exceptional/.style = {red, thick}, 
nonnormal/.style ={very thick, blue},
boundary/.style={nonnormal, dashed},
scale = .3
]

 \begin{scope}[yshift = -15cm, xshift = -12 cm]
\def\aa{12}
 \def\order{4}
 \foreach \n in {1,2,3} {
 \begin{scope}[rotate = \n*480/\order]
\draw[thick, black] 
(0:1cm) 
to (0:6cm) 
  to
(180/\order:10cm)
 to
 (360/\order:6cm)
 to (360/\order:1cm)
  arc [start angle = 360/\order, end angle = 0, radius = 1cm]
;
\foreach \r in {2cm} {
\draw[exceptional]  (0:\r) arc[radius = \r, start angle = 0 , delta angle = 360/\order] ;
}
 \end{scope}
 }
  
 \foreach \n in {1} {
 \begin{scope}[rotate = \n*480/\order]

\begin{scope}[rotate = -90+180/\order, yshift = 4.35cm] 
\draw[exceptional,  every loop/.style={looseness=80, min distance=80}]
 (0,0) ++ (-45: .5cm) to[out=160, in=-60] (0, 0) to[out=120, in =60,loop] () to[out=-120, in=20] ++(225:0.5cm);
 \end{scope}
 
 \coordinate (a) at (90/\order: 4.7); 
 \draw[exceptional] (0:3.5) to (a);
\draw[exceptional] (0:5) to (a);

 \coordinate (b) at (270/\order: 4.7); 
 \draw[exceptional] (360/\order:3.5) to (b);
\draw[exceptional] (360/\order:5) to (b);

 \draw[nonnormal] (300/\order:5cm) to (60/\order:5cm);
\draw[nonnormal] (0:1cm) to (0:6cm);
 \draw[nonnormal] (360/\order:1cm) to (360/\order:6cm);

 \node at (180/\order: 8) {$D$};
\end{scope}
 }

 \foreach \n in {2,3} {
 \begin{scope}[rotate = \n*480/\order]

 \coordinate (a) at (90/\order: 4.7); 
 \coordinate (b) at (270/\order: 4.7); 
 \coordinate (c) at (180/\order: 4.35); 
 
 \draw[exceptional] (0:3.5) to (a);
\draw[exceptional] (0:5) to (a);

\draw[exceptional] (b) to[bend left, looseness = 1.5] (c);

 \draw[exceptional] (360/\order:3.5) to[bend right] (c);
\draw[exceptional] (360/\order:5) to (b);

 \draw[nonnormal] (300/\order:5cm) to (60/\order:5cm);
\draw[nonnormal] (0:1cm) to (0:6cm);
 \draw[nonnormal] (360/\order:1cm) to (360/\order:6cm);

 \node at (180/\order: 8) {$B$};
\end{scope}
}

\begin{scope}
 \foreach \n in {1,2,3} {
 \begin{scope}[rotate = \n*360/\aa+\n*90-30]
\draw[thick, black, fill = black!20!white] 
(0:1cm) 
to (0:6cm) 
  to[out = 90, in =180/\aa+180] 
(180/\aa:10cm +12/\aa cm)
 to[out = 180/\aa+180, in = 360/\aa-90]
 (360/\aa:6cm)
 to (360/\aa:1cm)
  arc [start angle = 360/\aa, end angle = 0, radius = 1cm]
;
\foreach \r in {2cm, 3.5cm, 5cm} 
{
\draw[exceptional]  (0:\r) arc[radius = \r, start angle = 0 , delta angle = 360/\aa] ;
}
\draw[nonnormal] (0:1cm) to (0:6cm);
 \draw[nonnormal] (360/\aa:1cm) to (360/\aa:6cm);

 \end{scope}
 }
\end{scope}

\end{scope}

 \begin{scope}[yshift = -15cm, xshift = 12cm]
\def\aa{12}
 \def\order{4}
 \foreach \n in {1,2,3} {
 \begin{scope}[rotate = \n*360/\order]
\draw[thick, black] 
(0:1cm) 
to (0:6cm) 
  to
(180/\order:10cm)
 to
 (360/\order:6cm)
 to (360/\order:1cm)
  arc [start angle = 360/\order, end angle = 0, radius = 1cm]
;
\foreach \r in {2cm} {
\draw[exceptional]  (0:\r) arc[radius = \r, start angle = 0 , delta angle = 360/\order] ;
}
 \end{scope}
 }
  
 \foreach \n in {1} {
 \begin{scope}[rotate = \n*360/\order]

\begin{scope}[rotate = -90+180/\order, yshift = 4.35cm] 
\draw[exceptional,  every loop/.style={looseness=80, min distance=80}]
 (0,0) ++ (-45: .5cm) to[out=160, in=-60] (0, 0) to[out=120, in =60,loop] () to[out=-120, in=20] ++(225:0.5cm);
 \end{scope}
 
 \coordinate (a) at (90/\order: 4.7); 
 \draw[exceptional] (0:3.5) to (a);
\draw[exceptional] (0:5) to (a);

 \coordinate (b) at (270/\order: 4.7); 
 \draw[exceptional] (360/\order:3.5) to (b);
\draw[exceptional] (360/\order:5) to (b);

 \draw[nonnormal] (300/\order:5cm) to (60/\order:5cm);
\draw[nonnormal] (0:1cm) to (0:6cm);
 \draw[nonnormal] (360/\order:1cm) to (360/\order:6cm);

 \node at (180/\order: 8) {$D$};
\end{scope}
 }

 \foreach \n in {2,3} {
 \begin{scope}[rotate = \n*360/\order]

 \coordinate (a) at (90/\order: 4.7); 
 \coordinate (b) at (270/\order: 4.7); 
 \coordinate (c) at (180/\order: 4.35); 
 
 \draw[exceptional] (0:3.5) to (a);
\draw[exceptional] (0:5) to (a);

\draw[exceptional] (b) to[bend left, looseness = 1.5] (c);

 \draw[exceptional] (360/\order:3.5) to[bend right] (c);
\draw[exceptional] (360/\order:5) to (b);

 \draw[nonnormal] (300/\order:5cm) to (60/\order:5cm);
\draw[nonnormal] (0:1cm) to (0:6cm);
 \draw[nonnormal] (360/\order:1cm) to (360/\order:6cm);

 \node at (180/\order: 8) {$B$};
\end{scope}
}

\begin{scope}
 \foreach \n in {1,2,3} {
 \begin{scope}[rotate = \n*360/\aa-30]
\draw[thick, black, fill = black!20!white] 
(0:1cm) 
to (0:6cm) 
  to[out = 90, in =180/\aa+180] 
(180/\aa:10cm +12/\aa cm)
 to[out = 180/\aa+180, in = 360/\aa-90]
 (360/\aa:6cm)
 to (360/\aa:1cm)
  arc [start angle = 360/\aa, end angle = 0, radius = 1cm]
;
\foreach \r in {2cm, 3.5cm, 5cm} 
{
\draw[exceptional]  (0:\r) arc[radius = \r, start angle = 0 , delta angle = 360/\aa] ;
}
\draw[nonnormal] (0:1cm) to (0:6cm);
 \draw[nonnormal] (360/\aa:1cm) to (360/\aa:6cm);

 \end{scope}
 }
\end{scope}
\end{scope}

\end{tikzpicture}

 \end{figure}
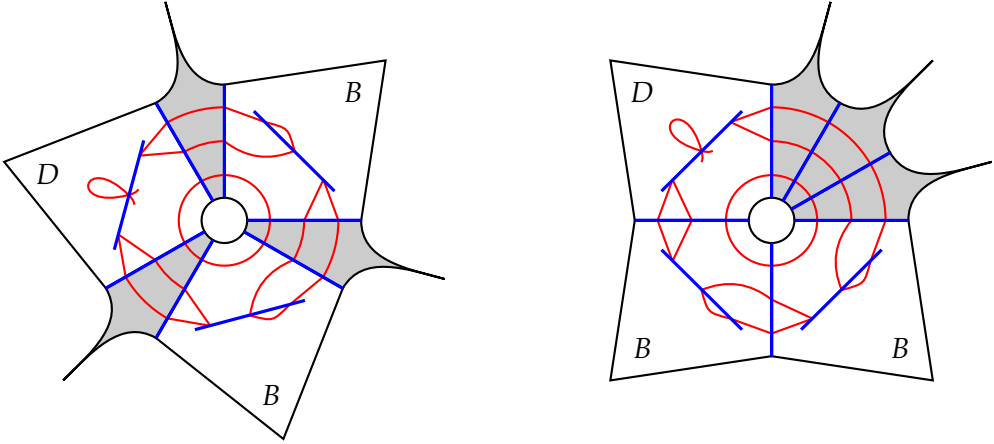

 \begin{cor}\label{cor: infected surface}
 Let $X$ be a stable surface satisfying the conditions of Proposition \ref{prop: infection}. Then for $a>0$ the stable surface $aV \infect X$ does not admit a $\IQ$-Gorenstein smoothing and  has invariants
 \[K_{aV\infect X}^2 = K_X^2 + \frac a3, \quad \chi(aV \infect X)  = \chi(X).\]
 If in addition $X$ is $2$-Gorenstein, then $P_2(aV\infect X) = P_2(X)$.  
 \end{cor}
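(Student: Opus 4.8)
The plan is to establish the statement for a single elementary infection and then induct on $a$: by Proposition~\ref{prop: infection} the surface $V\infect X$ again satisfies the hypotheses of that proposition, since its two new conductor components $C_1\isom\IP^1\isom C_2$ are interchanged by $\tilde\tau$ and each carries exactly three marked points, and all the quantities in question behave additively under iterating the infection. For the invariants I would feed the normalisation $\bar X\sqcup V$ of $V\infect X$, with conductor $\bar D\sqcup(C_1+C_2)$, into Proposition~\ref{prop: easy glueing}: intersection numbers split over the disjoint union, so $K_{V\infect X}^2=(K_{\bar X}+\bar D)^2+(K_V+C_1+C_2)^2=K_X^2+\tfrac13$; and for $\chi$ I track the five ingredients of the $\chi$-formula under one infection, namely $\chi(\bar X)\mapsto\chi(\bar X)+\chi(V)=\chi(\bar X)+1$, $\chi(\bar D)\mapsto\chi(\bar D)+\chi(C_1+C_2)=\chi(\bar D)-1$ (two rational curves meeting in three nodes), $\bar\mu\mapsto\bar\mu+3$, while $\delta$ and $\rho$ are unchanged — the first because Proposition~\ref{prop: infection} says no degenerate cusp is created or merged, the second because the four components involved stay interchanged in pairs and so carry no new fixed point. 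This gives $\Delta\chi=1-\tfrac12(-1)-\tfrac12\cdot3=0$.

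For the non-smoothability I observe that each glued-in copy of $V=\IP(1,1,3)$ keeps its unique singular point, of type $\tfrac13(1,1)$, because the general hyperplane sections $C_1,C_2$ avoid the vertex of the cone; hence $aV\infect X$ carries $a$ quotient singularities of type $\tfrac13(1,1)$. This singularity is not one of the quotient singularities admitting a $\IQ$-Gorenstein smoothing in the sense of Koll\'ar--Shepherd-Barron (the ``$T$-singularities''): it is not Du~Val, and an isomorphism $\tfrac13(1,1)\isom\tfrac1{dn^2}(1,dna-1)$ would force $n=1$, $d=3$, and then $3a\equiv2\pmod3$, which is impossible. Therefore $aV\infect X$ admits no $\IQ$-Gorenstein smoothing at all.

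For the plurigenus, assume in addition that $X$ is $2$-Gorenstein. By the argument of Proposition~\ref{prop: plurigenus} one has $P_2(Z)=\chi(Z,\omega_Z^{[2]})$ for every stable surface $Z$, and, since everything is $2$-Gorenstein along the conductor (Proposition~\ref{prop: easy glueing}), Koll\'ar's gluing provides an exact sequence $0\to\omega_Z^{[2]}\to\pi_*\omega_{\bar Z}^{[2]}(2\bar D_Z)\to\mathcal C_Z\to0$ with $\mathcal C_Z$ supported on the non-normal locus and $\chi(\mathcal C_Z)$ computable from the conductor and its image as in \cite{FPR15a}. Comparing $Z=X$ with $Z=V\infect X$, the middle term gains $\chi\bigl(V,\omega_V^{[2]}(2(C_1+C_2))\bigr)=\chi(\IP(1,1,3),\ko(2))=3$, and I claim $\chi(\mathcal C_Z)$ gains the same $3$: in $X$ the interchanged pair $\{\bar D_1^\nu,\bar D_2^\nu\}$ contributes the ``anti-invariant'' copy of $\omega^{[2]}_{\IP^1}(2\Diff)=\ko_{\IP^1}(2)$, of Euler characteristic $3$, whereas in $V\infect X$ the two interchanged pairs $\{\bar D_i^\nu,C_i\}$ each contribute such a copy, total $6$, the remaining node contributions cancelling. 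The vanishing that makes this clean is $H^1(\IP(1,1,3),\ko)=H^1(\IP(1,1,3),\ko(-4))=0$, which also shows the restriction $H^0\bigl(V,\omega_V^{[2]}(2(C_1+C_2))\bigr)\to H^0\bigl(C_1+C_2,\omega_{C_1+C_2}^{[2]}(2\Diff)\bigr)$ is an isomorphism — the geometric reason no section is lost. Hence $\chi(\omega_{V\infect X}^{[2]})=\chi(\omega_X^{[2]})$ and, inductively, $P_2(aV\infect X)=P_2(X)$. Combining this with $P_2(X)=\chi(X)+K_X^2$ from Corollary~\ref{cor: non-smoothable}(1) and the invariants above, $P_2(aV\infect X)=\chi(aV\infect X)+K_X^2\neq\chi(aV\infect X)+K_{aV\infect X}^2$, so by Corollary~\ref{cor: non-smoothable}(2) the connected component of $aV\infect X$ contains no canonical model — the form used for Theorem~A.

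The step I expect to be the main obstacle is the last one: correctly bookkeeping the Euler characteristic of $\mathcal C_Z$ over each orbit of conductor components (and over the three new nodes) and verifying that replacing a single free swap $\bar D_1\leftrightarrow\bar D_2$ by the two free swaps $\bar D_i\leftrightarrow C_i$ together with the automatic extension of boundary data across $\IP(1,1,3)$ leaves $h^0(\omega^{[2]})$ unchanged. Everything else is a direct application of Propositions~\ref{prop: easy glueing} and~\ref{prop: infection} together with the classification of $T$-singularities.
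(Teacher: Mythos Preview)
Your computation of $K^2$, $\chi$, and the local obstruction to $\IQ$-Gorenstein smoothing agrees with the paper (you are in fact more explicit about why $\tfrac13(1,1)$ is not a $T$-singularity, and your count of $a$ such points is correct --- the paper's ``$3a$'' is a slip).

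For $P_2$ the paper takes a different, shorter route that sidesteps exactly the conductor bookkeeping you flag as the obstacle. Rather than compare gluing sequences for $X$ and $V\infect X$, the paper resolves the $\tfrac13(1,1)$ points to obtain $\pi\colon a\hat V\infect X\to aV\infect X$ with $a$ exceptional $(-3)$-curves $E_i$. Since $X$ is $2$-Gorenstein and each $\hat V$ is smooth, the resolved surface is globally $2$-Gorenstein; rationality of quotient singularities gives $P_2(aV\infect X)=h^0(\omega_{a\hat V\infect X}^{[2]})$, and then Riemann--Roch for the Cartier divisor $2K_{a\hat V\infect X}$ together with the discrepancy $\pi^*K=K_{a\hat V\infect X}+\tfrac13\sum E_i$ yields $P_2=\chi(X)+K_{a\hat V\infect X}^2=\chi(X)+K_X^2=P_2(X)$ in one line. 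Your approach via the cokernel $\mathcal C_Z$ should also go through, but the phrase ``the remaining node contributions cancelling'' hides real work: the three new nodes of $C_1+C_2$ contribute skyscraper terms to $\mathcal C_Z$, and one must check these are absorbed by your isomorphism $H^0(V,\ko(2))\xrightarrow{\sim}H^0(C_1+C_2,\omega_{C_1+C_2}^{\otimes 2})$. The paper's method trades this sheaf-level accounting on the non-normal locus for a single intersection computation on a $2$-Gorenstein model.
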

\begin{proof}
 Using the formulas from Proposition \ref{prop: easy glueing} we see that
 \[K_{aV\infect X}^2 =  K_X^2 + a (K_V+C_1+C_2)^2  = K_X^2 + \frac a3.\]
For the holomorphic Euler characteristic we compute
\[ \chi(aV\infect X) -\chi(X) = a \left(\chi( V) -\frac 12 \chi(C_1+C_2) -\frac 12 \cdot 3\right) = a\left( 1+\frac 12 - \frac 32\right)= 0.\]
The surface $aV\infect X$ has $3a$ quotient singularities of type $\frac13(1,1)$ on the components isomorphic to $V$. Since such a singularity does not admit a $\IQ$-Gorenstein smoothing by \cite[Sect.~3]{ksb88}, the surface cannot admit a $\IQ$-Gorenstein smoothing. 

To compute the second plurigenus we consider a different partial resolution $\pi\colon a\hat V \infect X\to a V\infect X$ where we resolve all quotient singularites, so $a\hat V \infect X$ contains $a$ exceptional $-3$ curves $E_i$.
It is easy to compute locally on $V$ to confirm that 
\[\pi^*2K_{aV\infect X} = 2K_{a\hat V\infect X} + \sum_{i=1}^a \frac23 E_i.\]
Thus $\pi_* \omega_{a\hat V\infect X}^{[2]} = \omega_{aV\infect X}^{[2]}$ and since quotient singularites are rational we have
\[ H^i(\omega_{aV\infect X}^{[2]}) = H^i(\omega_{a\hat V\infect X}^{[2]}).\]
For $i>0$ these groups vanish by the generalised Kodaira vanishing, 
thus we can apply the Riemann-Roch formula \cite{LR16}  to the $2K_{a\hat V \infect X}$, which is Cartier by our assumptions, to compute
\begin{align*}
P_2(a V\infect X) &= \chi(\ko_{a \hat V \infect X}) + K_{a\hat V\infect X}^2 = \chi(X) + \left(\pi^*K_{aV\infect X} -\sum_{i=1}^a \frac 13 E_i\right)^2\\
&= \chi(X) + K_X^2 + \frac a3  +0 -a \frac 39  =  \chi(X) + K_X^2 = P_2(X)
 \end{align*}
By Corollary \ref{cor: non-smoothable} this gives another proof that $X$ is not smoothable, even if the invariants are integral.
\end{proof}

\subsection{Infected examples and  pure virus surfaces: proof of Theorem A}
We prove Theorem A by exibiting for each classically possible invariant a stable surface that is not $\IQ$-Gorenstein smoothable and not even in the same  connected component as a smoothable surface. Most of them come from the infection mechanism, but the first are pure virus surfaces.

 \begin{exam}[Pure virus surfaces]\label{exam: pure}
 We construct three surfaces from three copies of $V$ each as depicted in Figure \ref{fig: pure virus volume 1}. Then from Proposition \ref{prop: easy glueing} and arguing as in Corollary \ref{cor: infected surface} we see that
 \[K_{Z_{1, l}}^2 = 1, \quad P_2(Z_{1,l}) = \chi(Z_{1, l}) = l,\] and that $Z_{1,l}$ does not admit a $\IQ$-Gorenstein smoothing in the large.

\begin{figure}[h!]\caption{Pure triple virus surfaces (partial resolution)}\label{fig: pure virus volume 1}
\scriptsize
 \begin{tikzpicture}
[curves/.style = {thick},
exceptional/.style = {red, thick}, 
nonnormal/.style ={very thick, blue},
boundary/.style={nonnormal, dashed},
scale = .25
]

\begin{scope}[xshift = 12cm, yshift = -18cm] %
 \def\order{3}
 \node at (120/\order : 10cm +12/\order cm) {$Z_{1,1}$};
 \foreach \n in {1,2,3} {
 \begin{scope}[rotate = \n*360/\order]
\draw[thick, black, fill = black!20!white] 
(0:1cm) 
to (0:6cm) 
  to[out = 90, in =180/\order+180] 
(180/\order:10cm +12/\order cm)
 to[out = 180/\order+180, in = 360/\order-90]
 (360/\order:6cm)
 to (360/\order:1cm)
  arc [start angle = 360/\order, end angle = 0, radius = 1cm]
;
 \draw[nonnormal] (0:1cm) to (0:6cm);
 \draw[nonnormal] (360/\order:1cm) to (360/\order:6cm);
 \end{scope}
 }
\foreach \r in {2cm, 3.5cm, 5cm} 
{
\draw[exceptional]  (0:\r) arc[radius = \r, start angle = 0 , delta angle = 2*360/\order] ;
}
\draw[exceptional] (0:2)  to[out = -90, in = 330]  (-360/\order:3.5) ;
\draw[exceptional] (0:3.5) to[out = -90, in = 330]  (-360/\order:5) ;

\draw[ black!20!white,  line width = .15cm] (0:5)++(-90:0.1) to[out = -90, in = 330]  (-360/\order+5:2) ;
\draw[exceptional] (0:5) to[out = -90, in = 330]  (-360/\order:2) ;
 \end{scope}

\begin{scope}[xshift = 24cm] 
 \def\order{3}
  \node at (120/\order : 10cm +12/\order cm) {$Z_{1,2}$};
 \foreach \n in {1,2,3} {
 \begin{scope}[rotate = \n*360/\order]
\draw[thick, black, fill = black!20!white] 
(0:1cm) 
to (0:6cm) 
  to[out = 90, in =180/\order+180] 
(180/\order:10cm +12/\order cm)
 to[out = 180/\order+180, in = 360/\order-90]
 (360/\order:6cm)
 to (360/\order:1cm)
  arc [start angle = 360/\order, end angle = 0, radius = 1cm]
;
 \draw[nonnormal] (0:1cm) to (0:6cm);
 \draw[nonnormal] (360/\order:1cm) to (360/\order:6cm);
 \end{scope}
 }
\foreach \r in {2cm, 3.5cm, 5cm} 
{
\draw[exceptional]  (0:\r) arc[radius = \r, start angle = 0 , delta angle = 2*360/\order] ;
}
\draw[exceptional] (0:2) arc[radius = 2, start angle = 0 , delta angle = -360/\order] ;
\draw[exceptional] (0:3.5) to[out = -90, in = 330]  (-360/\order:5) ;

 \draw[ black!20!white,  line width = .15cm] (-5:4.5) to[out = -90, in = 330]  (-360/\order+5:3.5) ;
\draw[exceptional] (0:5) to[out = -90, in = 330]  (-360/\order:3.5) ;
 \end{scope}

\begin{scope}
 \def\order{3} \node at (120/\order : 10cm +12/\order cm) {$Z_{1,3}$};
 \foreach \n in {1,2,3} {
 \begin{scope}[rotate = \n*360/\order]
\draw[thick, black, fill = black!20!white] 
(0:1cm) 
to (0:6cm) 
  to[out = 90, in =180/\order+180] 
(180/\order:10cm +12/\order cm)
 to[out = 180/\order+180, in = 360/\order-90]
 (360/\order:6cm)
 to (360/\order:1cm)
  arc [start angle = 360/\order, end angle = 0, radius = 1cm]
;
\foreach \r in {2cm, 3.5cm, 5cm} 
{
\draw[exceptional]  (0:\r) arc[radius = \r, start angle = 0 , delta angle = 360/\order] ;
}
\draw[nonnormal] (0:1cm) to (0:6cm);
 \draw[nonnormal] (360/\order:1cm) to (360/\order:6cm);

 \end{scope}
 }
\end{scope}

\end{tikzpicture}
\end{figure}
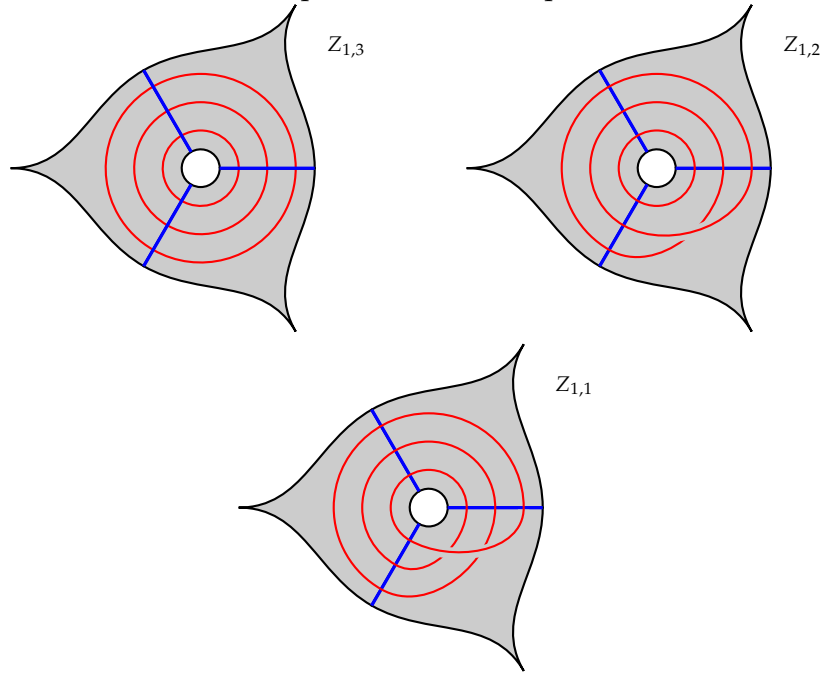
\end{exam}

\begin{rem}\label{rem: coughlan}
 Together with Stephen Coughlan we observed that $Z_{1,3}$ can be realised as a hypersurface of degree $9$ in $\IP(1,1,3,3)$ given by the product of three general weighted homogeneous polynomials of degree $3$. Then $\omega_{Z_{1,3}}=\ko_{Z_{1,3}}(1)$, so $P_2(Z_{1,3}) = 2$. This was the motivating example to look at the plurigenera of the general $aV\infect X$. 
\end{rem}

 \begin{exam}[Infecting surfaces $X_{k,l}$]\label{exam: infect}
Let $l> 0$. Then as long as $k-a+2\geq l$, that is, $1\leq a\leq k-l+2$ for some positive integer $a$ we constructed a surface $X_{k-a, l}$ in Section \ref{sect: Xkl}. By Corollary \ref{cor: infected surface} the infected surface $Z_{k,l} = 3aV\infect X_{k-a,l}$ is a stable surface with invariants $K_{Z_{k,l}}^2 = k$ and $\chi(Z_{k,l}) = l$ which does not admit a $\IQ$-Gorenstein smoothing in the large.
 \end{exam}
 
Considering the inequality $\chi(S)>0$ and the Noether inequality $K_S^2\geq2 \chi(S)-6$ for minimal surfaces of general type, we see that Example \ref{exam: infect} covers all classical invariants with the exception of  the cases with $K_S^2 = 1$ and $\chi(S) = 1,2,3$, which have been constructed in Example \ref{exam: pure}

Since they have different second plurigenus, all the examples we constructed cannot share a connected component with a component of the Gieseker moduli space. 
This concludes the proof of Theorem $A$. \hfill\qed

\section{Final observations}
The simplicity of our construction suggests that we should in general expect many new irreducible and connected components in the moduli space of stable surfaces. Indeed, the surfaces
$ 3aV \infect X_{k-a, l} $ for $0\leq a\leq k-l+2$ all have the same main invariants but different second plurigenus by Proposition \ref{prop: infection}, so by Corollary \ref{cor: non-smoothable} the moduli space $\overline \gothM_{k,l}$ has at least $k-l+2$ connected components. 

In addition, instead of $V$ one could try to repeat the construction with  any non-$\IQ$-Gorenstein smoothable stable surface which admits a suitable boundary, for example, a weighted projective plane containing a curve of low genus not passing through the singular points. 

Also, in the construction of the surfaces $X_{k,l}$ there is still a lot of flexibility and choice giving the same invariants, except in the extremal cases. Possibly, this would enable us to realise also different values for geometric genus and irregularity, which could be computed using the methods from \cite{FPR15b}.

\end{document}